\theoremstyle{thmstyleone}%
\newtheorem{theorem}{Theorem}%
\theoremstyle{thmstyletwo}%
\newtheorem{lemma}{Lemma}
\theoremstyle{thmstylethree}%
\newtheorem{assumption}{Assumption}
\newcommand{\N}{\mathcal{N}}
\newcommand{\D}{\mathcal{D}}
\begin{document}

\title[A non-monotone  trust-region method with noisy oracles and additional sampling]{A non-monotone  trust-region method with noisy oracles and additional sampling}
\author[1]{\orcid{https://orcid.org/0000-0003-3348-7233}\fnm{Nata\v sa} \sur{Kreji\'c}}\email{natasak@uns.ac.rs}
\author[1]{\orcid{https://orcid.org/0000-0001-5195-9295}\fnm{Nata\v sa} \sur{Krklec Jerinki\'c}}\email{natasa.krklec@dmi.uns.ac.rs}

\author[2]{\orcid{https://orcid.org/0000-0003-4826-1114}\fnm{\'Angeles} \sur{Mart\'{\i}nez} }\email{amartinez@units.it}
\author*[2]{\orcid{https://orcid.org/0000-0002-2937-9654}\fnm{Mahsa} \sur{Yousefi}}\email{mahsa.yousefi@phd.units.it}
\affil[1]{\orgdiv{Department of Mathematics and Informatics}, \orgname{University of Novi Sad}, \orgaddress{\street{Trg Dositeja Obradovi\' ca 4}, \city{Novi Sad}, \postcode{21000}, \country{Serbia}}}
\affil*[2]{\orgdiv{Department of Mathematics, Informatics, and Geosciences}, \orgname{University of Trieste}, \orgaddress{\street{Via Alfonso Valerio 12/1}, \city{Trieste}, \postcode{34127}, \country{Italy}}}


\abstract{In this work, we introduce a novel stochastic second-order method, within the framework of a non-monotone trust-region approach, for solving the unconstrained, nonlinear, and non-convex optimization problems arising in the training of deep neural networks. The proposed algorithm makes use of subsampling strategies that yield noisy approximations of the finite sum objective function and its gradient.
We introduce an adaptive sample size strategy based on inexpensive additional sampling to control the resulting approximation error. Depending on the estimated progress of the algorithm, this can yield sample size scenarios ranging from mini-batch to full sample functions. We provide convergence analysis for all possible scenarios and show that the proposed method achieves almost sure convergence under standard assumptions for the trust-region framework.
We report numerical experiments showing that the proposed algorithm outperforms its state-of-the-art counterpart in deep neural network training for image classification and regression tasks while requiring a significantly smaller number of gradient evaluations.}

\keywords{Stochastic Optimization, Second-order Methods, Non-monotone Trust-Region, Quasi-Newton, Deep Neural Networks Training, Adaptive Sampling}


\pacs[MSC Classification]{90C30, 90C06, 90C53, 90C90, 65K05}

\maketitle

\section{Introduction}\label{body1}

Deep learning (DL) as a leading technique of machine learning (ML) has attracted much attention and become one of the most popular directions of research. DL approaches have been applied to solve many large-scale problems in different fields by training deep neural networks (DNNs) over large available datasets. Let $\N=\{1, 2, \ldots, N\}$ be the index set of the training dataset $\{(x_i, y_i)\}_{i=1}^{N}$ with $N=|\N|$ sample pairs including input $x_i\in\mathbb{R}^d$ and target \textbf{$y_i\in\mathbb{R}^C$}. DL problems are often formulated as unconstrained optimization problems with an empirical risk function, in which a parametric function $\hat{h}( x_i;\cdot):\mathbb{R}^d \longrightarrow \mathbb{R}^C$ is found such that the prediction errors are minimized. More precisely, we obtain the following problem 
\begin{equation}\label{eq.problem}
    \min_{w \in \mathbb{R}^n} f(w) \triangleq \frac{1}{N} \sum_{i=1}^{N}{f_i(w)},
\end{equation}
where $w \in \mathbb{R}^n$ is the vector of trainable parameters and $f_i(w) \triangleq L(y_i, \hat{h}(x_i; w))$ with a relevant loss function $L(\cdot )$ measuring the prediction error between the target $y_i$ and the network's output $\hat{h}(x_i; w)$. The DL problem \eqref{eq.problem} is large-scale, highly nonlinear, and often non-convex, and thus it is not straightforward to apply traditional (deterministic)  optimization algorithms like steepest descent or Newton-type methods. Recently, much effort has been devoted to the development of DL optimization algorithms. Popular DL optimization methods can be divided into two \textit{general} categories, \textit{first-order} methods using gradient information, e.g. steepest gradient descent, and \textit{(higher-) second-order} methods using also curvature information, e.g. Newton methods \cite{nocedal2006numerical}. On the other hand, since the full (training) sample size $N$ in \eqref{eq.problem} is usually excessively large for a deterministic approach, these optimizers are further adapted to use subsampling strategies that aim to reduce computational costs. Subsampling strategies employ sample average approximations of the function and its gradient as follows 
\begin{equation}\label{eq.sampled-fungrad}
    f_{\N_k}(w) = \frac{1}{N_k} \sum_{i \in \N_k}{f_i(w)}, \quad \nabla f_{\N_k}(w) = \frac{1}{N_k} \sum_{i \in \N_k}{\nabla f_i(w)},
\end{equation}
where $\N_k \subseteq \N$ represents a subset of the full (training) sample set at iteration $k$ and $N_k$ is the subsample size, i.e., $N_k=|\N_k|$.

In this work, we propose a second-order trust-region (TR) algorithm \cite{conn2000trust} adapted to 
the stochastic framework where the step and the candidate point for the next iterate are obtained using subsampled function values and subsampled gradients \eqref{eq.sampled-fungrad}. The quadratic TR models are constructed by using Hessian approximations, without imposing a positive definiteness assumption, as the true Hessian in DL problems may not be positive definite due to their non-convex nature. Moreover, having in mind that we work with noisy approximations \eqref{eq.sampled-fungrad}, imposing a strict decrease might be unnecessary. Thus, we employ a non-monotone trust-region (NTR) approach; see e.g. \cite{ahookhosh2012nonmonotone} or references therein.
Unlike the classical TR, our decision on acceptance of the trial point is not based only on the agreement between the model and the approximate objective function decrease, but on the independent subsampled function. This "control" function which is formed through \textit{additional sampling}, similar to one proposed in \cite{di2023lsos} for the line search framework, also has a role in controlling the sample average approximation error by adaptively choosing the sample size. Depending on the estimated progress of our algorithm, this can yield sample size scenarios ranging from mini-batch to full sample functions. We provide convergence analysis for all possible scenarios and show that the proposed method achieves almost sure convergence under standard assumptions for the TR framework such as Lipschitz-continuous gradients and bounded Hessian approximations. 

{\textbf{Literature review.}} Stochastic first-order methods such as stochastic gradient descent (SGD) method \cite{robbins1951stochastic,bottou2004large}, and its variance-reduced \cite{defazio2014saga,johnson2013accelerating,schmidt2017minimizing,nguyen2017sarah} and adaptive \cite{duchi2011adaptive, kingma2017adam} variants have been widely used in many ML and DL applications likely because of their proven efficiency in practice. However, due to the use of only gradient information, these methods come with several issues like, for instance, relatively slow convergence, high sensitivity to the hyper-parameters, stagnation at high training loss \cite{bottou2018optimization}, difficulty in escaping saddle points, and suffering from ill-conditioning \cite{kylasa2019gpu}. To cope with some of these issues, there are some attractive alternatives as well as their stochastic variants aimed at incorporating second-order information, e.g. Hessian-Free methods \cite{martens2010deep, Ma12, bollapragada2019exact, xu2020second, martens2015optimizing, goldfarb2020practical} which find an estimation of the Newton direction by (subsampled) Hessian-vector products without directly calculating the (subsampled) Hessian matrix, and limited memory Quasi-Newton methods which construct some approximations of the true Hessian only by using gradient information. Furthermore, algorithms based on Quasi-Newton methods have been the subject of many research efforts both in convex (see e.g. \cite{mokhtari2015global,gower2016stochastic} and references therein) and non-convex settings (see e.g. \cite{wang2017stochastic, berahas2020robust, bollapragada2018progressive} and references therein), or \cite{jahani2020scaling,berahas2022quasi} where the advantage of modern computational architectures and parallelization for evaluating the full objective function and its derivatives is employed. In almost all these articles, the Quasi-Newton Hessian approximation used is BFGS or limited memory BFGS (L-BFGS) with positive definiteness property, which is often considered in the line-search framework except e.g. \cite{rafati2018improving,yousefi2022stochastic}. A disadvantage of using BFGS may occur when it tries to approximate the true Hessian of a non-convex objective function in DL optimization. 
We refer to \cite{erway2020trust} as one of the earliest works in which the limited memory Quasi-Newton update SR1 (L-SR1) allowing for indefinite Hessian approximation was used in a trust-region framework with a periodical progressive overlap batching.

The potential usefulness of non-monotonicity may be traced back to \cite{grippo1986nonmonotone} where a non-monotone line-search technique was proposed for the Newton method to relax some standard line-search conditions and to avoid slow convergence of a deterministic method. Similarly, the idea of non-monotonicity exploited for trust-region could be dated back to \cite{deng1993nonmonotonic} and later e.g. \cite{ahookhosh2012nonmonotone, cui2011combining} for a general unconstrained minimization problem. This idea was also used for solving problems such as \eqref{eq.problem} in a stochastic setting; in \cite{krejic2015nonmonotone}, a class of algorithms was proposed that uses non-monotone line-search rules fitting a variable sample scheme at each iteration. In \cite{yousefi2022stochasticNTR}, a non-monotone trust-region algorithm using fixed-size subsampling batches was proposed for solving \eqref{eq.problem}. Recently, in \cite{sun2023trust} a noise-tolerant TR algorithm has been proposed at the time of writing this paper, in which both the numerator and the denominator of the TR reduction ratio are relaxed. The convergence analysis presented in \cite{sun2023trust} is based on the assumption that errors in function and gradient evaluations are bounded and do not diminish as the iterates approach the solution. In \cite{Cao2023BerahasScheinberg} the authors derive high probability complexity bounds for 
  first- and second-order trust-region methods with noisy oracles, where the targeted vicinity of the solution depends on the quality of the stochastic estimates of the objective function and its gradient. They analyze modified trust-region algorithms that utilize stochastic zeroth-order oracles both in the bounded noise case and the independent subexponential noise case.
Inspired by \cite{yousefi2022stochasticNTR}, in this work, we introduce a new second-order method in a subsampled non-monotone trust-region (NTR) approach, which works well with any Hessian approximation and employs an adaptive subsampling scheme. The foundation of our method differs from that of \cite{sun2023trust,Cao2023BerahasScheinberg}. Our method is based on an additional sampling strategy helping to control the non-martingale error due to the dependence between the non-monotone TR radius and the search direction. To the best of our knowledge, there are only a few approaches using additional sampling; see \cite{iusem2019variance,krejic2015descent,di2023lsos}. The rule that we apply in our method mainly corresponds to that presented in \cite{di2023lsos} where it is used in a line-search framework and plays a role in deciding whether to switch from the line-search to a predefined step size sequence or not. We adapted this strategy to the TR framework and used it to control the sample size in our method. It is worth pointing out that the additional sampling can be arbitrarily small, i.e. the sample size can even be $1$, and hence, it does not increase the computational cost significantly. Adaptive sample size strategies can be also found in other works, for instance, a type of adaptive subsampling strategy was applied for the STORM algorithm \cite{blanchet2019convergence,chen2018stochastic} which is also a second-order method in a standard TR framework. A different strategy using inexact restoration was proposed in \cite{bellavia2023stochastic} for a first-order standard TR approach. The variable size subsampling is not restricted to TR frameworks, see e.g. \cite{bollapragada2018progressive} where a progressive subsampling was considered for a line-search method.

{\textbf{Notation.}} Throughout this paper, vectors and matrices are respectively written in lowercase and uppercase letters unless otherwise specified in the context. The symbol $\triangleq$ is used to define a new variable. $\mathbb{N}$ and $\mathbb{R}^n$ denote the set of natural numbers and the real coordinate space of dimension $n$, respectively. The set of positive real numbers and non-negative integers are denoted by $\mathbb{R}^n_+$ and $\mathbb{N}_0$, respectively. Subscripts indicate the elements of a sequence. For a random variable $X$, the mathematical expectation of $X$ and the conditional expectation of $X$ given the $\sigma$-algebra $\mathcal{F}$ are respectively denoted by $\mathbb{E}(X)$ and $\mathbb{E}(X|\mathcal{F})$. The Euclidean vector norm  and the corresponding matrix norm are denoted  by $\|.\|$, while the cardinality of a set or the absolute value of a number is indicated by $|.|$.  Finally, \mbox {"a.s"} abbreviates the expression \mbox{"almost surely"}.

{\textbf{Outline of the paper.}} In Sect.~\ref{Algo}, we describe the algorithm and all the necessary ingredients. In Sect.~\ref{Conv}, we state the assumptions and provide an almost sure convergence analysis of the proposed method. \Cref{Num} is devoted to the numerical evaluation of a specific version of the proposed algorithm that makes use of an L-SR1 update and a simple sampling rule; we make a comparison with the state-of-the-art method STORM \cite{chen2018stochastic,blanchet2019convergence} to show the effectiveness of the proposed method for training DNNs in classification and regression tasks. A comparison with the popular first-order method ADAM \cite{kingma2017adam} is also presented. Some conclusions are drawn in Sect.~\ref{Concl}.

\section{The algorithm}\label{Algo}
 Within this section, we describe the proposed method called ASNTR (Adaptive Subsample  Non-monotone Trust-Region). At iteration $k$, given the current iteration $ w_k$, we form a quadratic model based on the subsampled function \eqref{eq.sampled-fungrad}, with $g_k \triangleq \nabla f_{\N_k}(w_k)$ and  an arbitrary  Hessian approximation $B_k$ satisfying 
\begin{equation} \label{eq.llBll} 
    \|B_k\| \leq L,
\end{equation}
for some $L>0$ and solve the common TR subproblem to obtain the relevant direction  
\begin{equation}\label{eq.TRsub}
 p_k \triangleq \arg\min_{p \in \mathbb{R}^n}  Q_k(p) \triangleq \frac{1}{2}p^T B_k p + g_k^T p \quad  \text{s.t.} \quad \left\| p \right\|_2 \leq \delta_k,
\end{equation}
for some TR radius $\delta_k>0$. We assume that at least some fraction of the Cauchy decrease is obtained, i.e., the direction satisfies 
    \begin{equation} \label{eq.Cauchy} 
    Q_k(p_k) \leq -\frac{c}{2} \|g_k\| \min \{\delta_k, \frac{\|g_k\|}{\|B_k\|}\},
    \end{equation}
for some $c \in (0,1]$. This is a standard assumption in TR and it is not restrictive even in the stochastic framework. In the classical deterministic TR approach, the trial point $ w_t = w_k + p_k $ acceptance is based on the agreement between the decrease in the function and that of its quadratic model. However, since we are dealing with noisy approximations \eqref{eq.sampled-fungrad}, we modify the acceptance strategy as follows. Motivated by the study in \cite{yousefi2022stochasticNTR}, we propose a non-monotone TR (NTR) framework instead of the standard TR one because we do not want to impose a strict decrease in the approximate function. Therefore, we define the relevant ratio as follows 
\begin{equation}\label{eq.rho_N}
    \rho_{\N_k}\triangleq\frac{f_{\N_k}(w_t)-r_{\N_k}}{Q_k(p_k)}, 
\end{equation}
where  
\begin{equation}\label{eq.r_N}
    r_{\N_k}\triangleq f_{\N_k}(w_k)+ t_k \delta_k,\quad t_k>0,
\end{equation}
and 
\begin{equation} \label{eq.sumT} 
\sum_{k=0}^{\infty} t_k \leq t < \infty.
\end{equation}
The sequence $ \{t_k\} $ is a summable sequence of positive numbers and one can define it in different ways to control the level of non-monotonicity in each iteration. Different choices of $\{t_k\}$ lead to different upper bounds of its sum which we denote by $t$. We allow $\N_k$ to be chosen arbitrarily in ASNTR. However, even if we impose  uniform sampling with replacement to $\N$  such that it yields an unbiased estimator $f_{\N_k}(w_k)$ of the objective function at $w_k$, the dependence between $\N_k$ and $w_t$ may produce  a biased estimator  $f_{\N_k}(w_t)$ of $f(w_t)$. This is because $\N_k$ directly affects the TR model $Q_k(p)$ through the approximate gradient $g_k= \nabla f_{\N_k}(w_k)$ and thus $p_k$ also depends on the choice of $\N_k$. Thus, $w_t=w_k+p_k$ is also dependent on $\N_k$. To overcome this difficulty, we apply an additional sampling strategy  \cite{di2023lsos}. To this end, at every iteration at which $N_k<N$, we choose another independent subsample set represented by the index set $\D_k \subset \N$ of size $D_k=|\D_k|<N$ and calculate $f_{\D_k}(w_k), f_{\D_k}(w_t)$ and $\bar{g}_k \triangleq \nabla f_{\D_k}(w_k)$ (see lines 5-6 of the ASNTR algorithm). There are no theoretical requirements on the size of $\D_k$, and hence, the additional sampling might be done cheaply, i.e. with a modest number of additional samples. In fact, in our experiments, we set $D_k=1$ for all $k$. Furthermore, in the spirit of TR, we define a linear model as $L_k(v)\triangleq v^T \bar{g}_k,$
and consider another agreement measure defined as follows
\begin{equation} \label{eq.rho_D}
    \rho_{\D_k} \triangleq \frac{f_{\D_k}(w_t)-r_{\D_k}}{L_k(-\bar{g}_k)},
\end{equation}
where 
\begin{equation} \label{eq.r_D} 
    r_{\D_k}\triangleq f_{\D_k}(w_k)+\delta_k \tilde{t}_k,\quad \tilde{t}_k>0,
\end{equation}
and
\begin{equation} \label{eq.sumTtilda}
    \sum_{k=0}^{\infty} \tilde{t}_k \leq \tilde{t} < \infty.
\end{equation}
We assume that $ \{\tilde{t}_k\} $ is a summable sequence of positive numbers and that $\tilde{t}$ is an upper bound of the sum of $ \{\tilde{t}_k\} $. Therefore, $\tilde{t}$   is controllable through the choice of $ \{\tilde{t}_k\} $. Notice that choosing a greater $\tilde{t}_k$ yields more chances for the trial point $w_t$ to be accepted. The denominator in \eqref{eq.rho_D} is the linear model computed along the negative gradient  $ \bar{g}_k $ and thus it is negative. Therefore, the condition $\rho_{\D_k}\geq \nu$ corresponds to  Armijo-like condition for the function $ f_{\D_k}, $ similar to one in \cite{di2023lsos}  i.e.,
$f_{\D_k}(w_{t}) \leq f_{\D_k}(w_{k})- \nu \| \nabla f_{\D_k}(w_{k})\|^2+\delta_k \tilde{t}_k$.
If $N_k<N$, the trial point is accepted only if both $\rho_{\N_k}$ and $\rho_{\D_k}$ are bounded away from zero; otherwise, if the full sample is used, the decision is made by $\rho_{\N_k}$ solely as in deterministic NTR (see lines 23-35 of the ASNTR algorithm). The rationale behind this is the following: we double-checked that the trial point obtained employing $f_{\N_k}$ is acceptable also with respect to another approximation of the objective function $f_{\D_k}$. Notice that $\D_k$ is chosen with replacement, uniformly and randomly from $\N$, independently from the choice of $\N_k$, and after the trial point $w_t$ is already determined. Therefore, conditionally on $\sigma$-algebra generated by all the previous choices of $\N_j, j=1,...,k$ and $\D_j, j=1,...,k-1$, the approximation  $f_{\D_k}(w_t)$ is an unbiased estimator of $f(w_t)$. 

Another role of $\rho_{\D_k}$ is to control the sample size. If the obtained trial point $w_t$ yields an uncontrolled increase in $f_{\D_k}$ in a sense that $\rho_{\D_k}< \nu$, i.e., $f_{\D_k}(w_{t}) > f_{\D_k}(w_{k})- \nu \| \nabla f_{\D_k}(w_{k})\|^2+\delta_k \tilde{t}_k$,  we conclude that we need a better approximation of the objective function and we increase the sample size $N_k$ by choosing a new sample set $\N_k$ for the next iteration.  Roughly speaking, an uncontrolled increase in $f_{\D_k}$ is possible if the approximate function $ f_{\D_k} $ is very different from $f_{\N_k}$ given that the search direction is computed for $f_{\N_k}$. The sample can also be increased if we are too close to a stationary point of the approximate function $f_{\N_k}$. This is stated in line 7 of ASNTR, where $h$ represents an SAA error estimate given by 
$$h(N_k)\triangleq \frac{N-N_k}{N}.$$

\begin{algorithm}[H]
\small
\begin{algorithmic}[1]
\State{Initialization: 
Choose $\mathcal{N}_0\subseteq \mathcal{N}$}. Set $k=0$, 
$\{t_k\} \in \mathbb{R}^{\infty}_{+}$ satisfying \eqref{eq.sumT}, $\{\tilde{t}_k\} \in \mathbb{R}^{\infty}_{+}$ satisfying \eqref{eq.sumTtilda}, 
$\delta_0$ and $\delta_{max} \in (0,\infty)$, 
$\epsilon \in [0, \frac{1}{2})$, 
$\nu \in (0,1/4)$,
$0<\tau_1 \leq 0.5< \tau_2 <1< \tau_3$, 
$0 < \eta < \eta_2 \leq 3/4$, and $\eta_1 \in (\eta, \eta_2)$.
\State{Given $f_{\N_k}(w_k)$, $g_k$ and $B_k$ satisfying \eqref{eq.llBll}, solve \eqref{eq.TRsub} for $p_k$ such that \eqref{eq.Cauchy} holds, and define the trial iterate $w_t = w_k + p_k$.}
\State{Given $f_{\N_k}(w_t)$, compute $\rho_{\N_k}$} using \eqref{eq.rho_N}.
\If{$N_k<N$}
\State{Choose $\D_{k}\subset \mathcal{N} $ randomly and uniformly, with replacement}.
\State{Given $f_{\D_k}(w_k)$, $\nabla f_{\D_k}(w_k)$, and $f_{\D_k}(w_t)$, compute $\rho_{\D_k}$ using \eqref{eq.rho_D}}.
\If{$\|g_k\|<\epsilon h(N_k)$  }
\State{Increase $N_k$ to $N_{k+1}$ and choose $\N_{k+1}$.} 
\Else
\If{$\rho_{\D_k}<\nu$}
\State{Increase $N_k$ to $N_{k+1}$ and choose $\N_{k+1}$.}
\Else
\If{$\rho_{\N_k} < \eta$}
\State{Set $N_{k+1} = N_k$ and $\N_{k+1} = \N_k$.}
\Else
\State{Set $N_{k+1} = N_k$ and choose $\N_{k+1}$.}
\EndIf
\EndIf
\EndIf
\Else
\State{$N_{k+1} = N$}
\EndIf
\If{$N_k<N$} 
\If{$\rho_{\N_k} \geq \eta$ and $\rho_{\D_k} \geq \nu$} 
\State{$w_{k+1}=w_t$.} 
\Else
\State{ $w_{k+1}=w_k$.}
\EndIf
\Else 
\If{$\rho_{\N_k} \geq \eta$} 
\State{$w_{k+1}=w_t$.} 
\Else
\State{ $w_{k+1}=w_k$.}
\EndIf
\EndIf
\If{$\rho_{\N_k}<\eta_1$,}
\State{ $\delta_{k+1}=\tau_1\delta_k$}
\ElsIf{$\rho_{\N_k}>\eta_2$ and $\|p_k\|\geq \tau_2 \delta_k$,}
\State{$\delta_{k+1}=min\{\tau_3\delta_k, \delta_{max}\}$,}
\Else
\State{$\delta_{k+1}=\delta_k$.}
\EndIf
\If{Some stopping conditions hold}
\State{Stop training}
\Else
\State{Set $k=k+1$ and go to step 2.}
\EndIf
\end{algorithmic}
\caption{\small ASNTR\label{alg_ASNTR}}
\end{algorithm}

The algorithm can also keep the same sample size (see lines 14 and 16 of the ASNTR algorithm). Keeping the same sample $\N_k$ in line 14 corresponds to the case where the trial point is acceptable with respect to $f_{\D_k}$, but we do not have a decrease in $f_{\N_k}$. In that case, the (non-monotone) TR radius ($\delta_k$) is decreased. Otherwise, we allow the algorithm in line 16 to choose a new sample of the current size and exploit some new data points. The strategy for updating the sample size is described in lines 7-19 of the ASNTR algorithm.

Notice that the sample size cannot be decreased, and if the full sample is reached it is kept until the end of the procedure. Moreover, it should be noted that ASNTR provides complete freedom in terms of the increment in the sample size as well as the choice of samples $\N_k$. This leaves enough space for different sampling strategies within the algorithm. As we already mentioned, mostly depending on the problem, ASNTR can result in a mini-batch strategy, but it can also yield an increasing sample size procedure. 

The TR radius is updated within lines 36-42 of ASNTR. We follow a common update strategy for TR approaches. It is completely based on $f_{\N_k}$ since it is related to the error of the quadratic model, and not to the SAA error which we control by additional sampling. Thus, if the $\rho_{\N_k} $ is small we decrease the trust-region size, lines 36-37. Otherwise, the trust-region is either enlarged or kept the same, lines 38-42 of ASNTR. Notice that the additional condition that relates the norm of the search direction $ p_k $ and current trust-region size $ \delta_k $ does not play any role in the theoretical analysis but it is important in practical implementation. We need some predetermined hyper-parameters for ASNTR, which are established in the algorithm's initial line according to ones outlined in relevant references e.g.,  \cite{nocedal2006numerical,erway2020trust}, and to meet the assumptions underlying the convergence analysis.
\section{Convergence analysis}\label{Conv}
We make the following standard assumption for the TR framework needed to prove the a.s. convergence result of ASNTR.

\begin{assumption}\label{ass1}  The functions $f_i, i=1,...,N$ are bounded from below and  twice continuously-differentiable   with  L-Lipschitz-continuous gradients. 
\end{assumption}
First, we prove an important lemma that will help us prove the main result, the a.s. convergence of ASNTR. There are two possible scenarios depending on the size of the sample sequence:  1) mini-batch scenario - where the subsampling is employed in each iteration; 2) deterministic scenario - where the full sample is reached eventually. We start the analysis by considering the first, mini-batch scenario.  In Lemma \ref{Lemma2new}, we show that in this case there holds  $\rho_{\D_k}\geq \nu$ for any realization of $\D_k$ and all $k$ sufficiently large. 

Let us denote by $\D_k^{+}$ the subset of all possible outcomes of $\D_k$ at iteration $k$ that satisfy $\rho_{\D_k} \geq \nu$, i.e., 
\begin{equation} \label{dkp}
    \D_k^{+}= \{\D_k \subset \N \; | \; f_{\D_k}(w_{t}) \leq f_{\D_k}(w_{k})- \nu \| \nabla f_{\D_k}(w_{k})\|^2+\delta_k \tilde{t}_k \}.
\end{equation}
Notice that if $\D_k \in \D_k^{+}$ and $\rho_{\N_k} \geq \eta$ then $w_{k+1}=w_t$. 
On the other hand, if $\D_k \in \D_k^{+}$
and $\rho_{\N_k} < \eta$ then $w_{k+1}=w_k$. Finally,  if $\D_k \in \D_k^{-}$, where 
\begin{equation} \label{dkm}
    \D_k^{-}= \{\D_k \subset \N\; | \; f_{\D_k}(w_{t}) > f_{\D_k}(w_{k})- \nu \| \nabla f_{\D_k}(w_{k})\|^2+\delta_k \tilde{t}_k \},
\end{equation}
we have again $w_{k+1}=w_k$.

\begin{lemma}\label{Lemma2new} 
    Suppose that \Cref{ass1} holds. If $N_k<N$ for all $k \in \mathbb{N}$, then there exists $k_1 \in \mathbb{N}$ such that $\rho_{\D_k}\geq \nu$ for all $k \geq k_1$ and for all possible realizations  $\D_k$.
\end{lemma}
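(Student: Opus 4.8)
The plan is to first strip the randomness out of the sample size and reduce the claim to a statement about a single, stabilized sample, and only afterwards to handle the phrase ``for all possible realizations'' by a probabilistic argument.

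First I would use the fact that, by construction, the sample size is never decreased, so under the hypothesis $N_k<N$ for all $k$ the sequence $\{N_k\}$ is non-decreasing, integer-valued and bounded above by $N-1$; hence it is eventually constant, say $N_k=\bar N<N$ for all $k\geq \hat k$. For every such $k$ the sample is never enlarged, and inspecting lines 7--11 of ASNTR the only two mechanisms that enlarge it are the test $\|g_k\|<\epsilon\,h(N_k)$ in line 7 and the test $\rho_{\D_k}<\nu$ in line 10. Consequently, for all $k\geq \hat k$ we must have both $\|g_k\|\geq \epsilon\,h(\bar N)>0$ (a condition not involving $\D_k$) and, for the realization of $\D_k$ that actually occurred, $\rho_{\D_k}\geq \nu$.

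This already settles the claim for the realized $\D_k$, but the statement asks for \emph{all} admissible realizations, and here a deterministic worst-case estimate is hopeless: since $p_k$ is a descent direction for the $\N_k$-model and not for $f_{\D_k}$, the cross term in the descent-lemma bound $f_{\D_k}(w_t)-f_{\D_k}(w_k)\leq \bar{g}_k^T p_k+\tfrac{L}{2}\|p_k\|^2$ may be positive and the term $-\nu\|\bar{g}_k\|^2$ from \eqref{dkp} cannot be absorbed. So I would argue probabilistically. Let $\F_{k+1/2}$ be the $\sigma$-algebra of everything prior to drawing $\D_k$, so that $w_k,g_k,p_k,w_t,\delta_k$ are $\F_{k+1/2}$-measurable and the only remaining randomness in $\rho_{\D_k}$ is $\D_k$, and set $q_k\triangleq \mathbb{P}(\rho_{\D_k}<\nu\mid \F_{k+1/2})$. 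Because $\D_k$ is drawn uniformly among the finitely many subsets of the prescribed size, the set of \emph{bad} outcomes (those with $\rho_{\D_k}<\nu$) is either empty or carries conditional probability at least some fixed $p_{\min}>0$.

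The final and main step is a contradiction via the second (conditional) Borel--Cantelli lemma. If the conclusion failed, then on a positive-probability subset of $\{N_k<N\ \forall k\}$ the bad set would be nonempty for infinitely many $k\geq \hat k$, giving $q_k\geq p_{\min}$ infinitely often and hence $\sum_k q_k=\infty$ there; since the $\D_k$ are sampled independently, L\'evy's extension of Borel--Cantelli forces $\{\rho_{\D_k}<\nu\}$ to occur infinitely often almost surely, and each such occurrence triggers the increase in line 11, contradicting $N_k=\bar N<N$. Hence $q_k=0$ for all large $k$, i.e.\ there is $k_1$ beyond which \emph{every} admissible realization of $\D_k$ satisfies $\rho_{\D_k}\geq \nu$. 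The hard part is exactly this translation of ``for all realizations'' into ``the bad set is empty''; it hinges on the independence and finiteness of the additional sampling together with the monotonicity of $N_k$, whereas the reduction to a stabilized sample and the reading of the acceptance/increase rules from the pseudocode are routine.
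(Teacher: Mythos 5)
Your proposal is correct and follows essentially the same route as the paper: stabilize the sample size at some $\bar N<N$, observe that a nonempty ``bad'' set for $\D_k$ carries conditional probability at least $1/N$ under uniform sampling, and conclude that if bad sets were nonempty infinitely often a bad draw would almost surely occur and force a sample-size increase, contradicting $N_k<N$ for all $k$. The only (cosmetic) difference is that you invoke L\'evy's conditional Borel--Cantelli where the paper writes the infinite product $\prod_{k\in K}\frac{N-1}{N}=0$ directly; your version is arguably the more careful phrasing, since the bad sets $\D_k^{-}$ depend on the history and the product argument implicitly requires exactly the conditioning you make explicit.
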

\begin{proof} Assume that $N_k<N$ for all $k \in \mathbb{N}$. So, there exists some $\bar{N}<N$  and $k_0\in \mathbb{N}$ such that $N_k=\bar{N}$ for all $k \geq k_0$. Now, let us assume that$\rho_{\D_k}\geq \nu$ for at least one possible outcome of $ \D_k. $ 
This implies the existence of an infinite subsequence of iterations $K \subseteq \mathbb{N}$ such that $\rho_{\D_k}\geq \nu$ is not satisfied for at least one possible outcome of $\D_k$.
To be more precise,  let us use the notation as in \eqref{dkp}-\eqref{dkm}. Then, we have that  $\D_k^{-}\neq \emptyset$ for all $k \in K$. Since $\D_k$ is chosen randomly and uniformly with replacement from the finite set $\N$ and  $D_k\leq N-1$, we know that each $\D_k$ has only finitely many possible outcomes $S(D_k)\leq \bar{S}:=(2N-2)!/((N-1)!)^2$ and  there exists  $\underline{p} \in (0,1)$ such that  $P(\D_k \in \D_k^{-})\geq \underline{p}$, i.e.,  $P(\D_k \in \D_k^{+})\leq 1-\underline{p}=:\bar{p}<1$ for all $k \in K$. So, 
$$P(\D_k \in \D_k^{+}, k \in K)\leq \prod_{k \in K} \bar{p}=0. $$
Therefore, a.s. there exists an iteration $ k \geq k_1 $ such that  $\rho_{\D_k} < \nu $ and the sample size is increased, 
which is in contradiction with $N_k=\bar{N}$ for all $k$ large enough. This completes the proof.
\end{proof}

Next, we prove that the mini-batch scenario implies a non-monotone type of decrease for all $k$ large enough. 

\begin{lemma}\label{Lemmafdescent1} 
    Suppose that \Cref{ass1} is satisfied  and  
    $N_k<N$ for all $k \in \mathbb{N}$. Then
    $$f(w_t) \leq f(w_k)- \nu \| \nabla f(w_{k})\|^2+\delta_k \tilde{t}_k,$$
    holds for all $k \geq k_1$, where $k_1$ is as in Lemma \ref{Lemma2new}.
\end{lemma}
\begin{proof} Lemma \ref{Lemma2new} implies  that $\rho_{\D_k}\geq \nu$, i.e., 
\begin{equation} \label{rev1} f_{\D_k}(w_{t}) \leq f_{\D_k}(w_{k})- \nu \| \nabla f_{\D_k}(w_{k})\|^2+\delta_k \tilde{t}_k,\end{equation} 
for all $k \geq k_1$ and for all possible realizations  of $\D_k$. Since the sampling of $\D_k$ is with replacement, notice that this further yields
\begin{equation} \label{rev2} f_{i}(w_{t}) \leq f_{i}(w_{k})- \nu \| \nabla f_{i}(w_{k})\|^2+\delta_k \tilde{t}_k,
\end{equation}
for all $i \in \N$ and all $k \geq k_1$. Indeed, if there exists $i \in \N$ such that \eqref{rev2} is violated, then \eqref{rev1} is violated for at least one possible  realization of $\D_k$, namely, for  $\D_k=\{i,i,...,i\}$.  Thus, for all $k \geq k_1$ there holds 
\begin{eqnarray} 
f(w_t) &=& \frac{1}{N} \sum_{i=1}^{N} f_{i}(w_{t}) \leq \frac{1}{N} \sum_{i=1}^{N} (f_{i}(w_{k})- \nu   \| \nabla f_{i}(w_{k})\|^2+\delta_k \tilde{t}_k)\\\nonumber
&=& f(w_k)- \nu \frac{1}{N} \sum_{i=1}^{N} \| \nabla f_i(w_{k})\|^2+\delta_k \tilde{t}_k\\\nonumber
&\leq & f(w_k)- \nu \| \nabla f(w_k)\|^2+\delta_k \tilde{t}_k,
\end{eqnarray}
where the last inequality comes from the fact that $\|\cdot\|^2$ is convex and therefore
$$\|\nabla f(w_k)\|^2=\|\frac{1}{N} \sum_{i=1}^{N} \nabla f_i(w_{k})\|^2\leq \frac{1}{N} \sum_{i=1}^{N} \|\nabla f_i(w_{k})\|^2.$$
\end{proof}

Now, let us prove that starting from some finite but random iteration,  the sequence of iterates generated by ASNTR belongs to a level set. This level set is also random since it depends on the sample path.  
\begin{lemma}\label{Lemma1} 
    Suppose that \Cref{ass1} holds. Then  
    $$f(w_{\tilde{k}+k})\leq f(w_{\tilde{k}}) + \delta_{max}\max \{t,\tilde{t}\}, \quad k = 0,1,\ldots\, $$
    where $\tilde{k}$ is some finite random number,  $t$ and $\tilde{t}$ correspond to those in \eqref{eq.sumT} and \eqref{eq.sumTtilda} respectively.
\end{lemma}
\begin{proof}  Let us consider both scenarios, mini-batch and deterministic separately. In the mini-batch case,  when ${N_k<N}$  for each ${k}$, Lemma \ref{Lemmafdescent1} implies that 
    $$f(w_t) \leq f(w_k)- \nu \| \nabla f(w_{k})\|^2+\delta_k \tilde{t}_k\leq f(w_k)+\delta_k \tilde{t}_k,$$
holds for all $k \geq k_1$, where $k_1$ is as in Lemma \ref{Lemma2new}. Since $w_{k+1} = w_t $ or  $w_{k+1}=w_k$, there holds 
    $$f(w_{k+1}) \leq f(w_k)+\delta_k \tilde{t}_k,$$ 
for all $k \geq k_1$. So, the summability of $\{\tilde{t}_k\}$ \eqref{eq.sumTtilda} and the fact that $\delta_k \leq \delta$ for all $k$ together imply that the statement of this lemma holds with $\tilde{k}=k_1$.

Assume now that  ${N}$ is achieved at some finite iteration, i.e.,  there exists a finite iteration $k_2$ such that $N_k=N$ for all $k \geq  k_2$. Thus, the trial point for all $k \geq  k_2$ is accepted if and only if $\rho_{\N_k}\geq \eta$. This implies that  for all $k \geq  k_2$ we either have $f(w_{k+1})= f(w_k)$ or
\begin{equation} \label{eq.inLemma4} 
f(w_{k+1}) \leq f(w_k)+\delta_k t_k-\frac{\eta c}{2}\|g_k\| \min\{\delta_k, \frac{\|g_k\|}{\|B_k\|}\}\leq f(w_k)+\delta_{max}t_k,
\end{equation}
where $\|g_k\| = \|\nabla f(w_k)\|$ in this scenario. Thus, using the summability of $\{t_k\}$ in \eqref{eq.sumT} we obtain the result with $\tilde{k}=k_2$. 
\end{proof} 

In order to prove the main convergence result, we assume that the expected value of $f(w_{\tilde{k}})$ is uniformly bounded, where the expectation is taken for all possible sample paths. This assumption, together with the result of Lemma \ref{Lemma1}, implies that the sequence $\{f(w_k)\}_{k \geq \tilde{k}}$ is uniformly bounded in expectation. 
Indeed, by assuming $\mathbb{E}(|f(w_{\tilde{k}})| ) \leq C$ we have $\mathbb{E}(|f(w_{\tilde{k}})|\; | \; A) \leq C_1$, where $A$ represents the subset of all possible outcomes (sample paths) such that the full sample is reached eventually and $C_1$ is some positive constant. 
$$P(A) \mathbb{E}(|f(w_{\tilde{k}})|\; | \; A)\leq P(A) \mathbb{E}(|f(w_{\tilde{k}})|\; | \; A)+P(\bar{A}) \mathbb{E}(|f(w_{\tilde{k}})|\; | \; \bar{A})= \mathbb{E}(|f(w_{\tilde{k}})| ) \leq C,$$
where $\bar{A}$ represents all possible sample paths that remain in the mini-batch scenario. Thus, we obtain 
$$\mathbb{E}_A (|f(w_{\tilde{k}})| ):=\mathbb{E}(|f(w_{\tilde{k}})|\; | \; A)\leq C/P(A)=:C_1.$$
Similarly, 
$$\mathbb{E}_{\bar{A}} (|f(w_{\tilde{k}})| ):=\mathbb{E}(|f(w_{\tilde{k}})|\; | \; \bar{A})\leq C/P(\bar{A})=:C_2.$$
Notice that the above-mentioned assumption is satisfied under the assumption of bounded iterates which is fairly common in a stochastic optimization framework. 
\begin{theorem} \label{Theorem1} 
Suppose that \Cref{ass1} holds and that $\mathbb{E}(|f(w_{\tilde{k}})|) \leq C< \infty$, where $\tilde{k}$ is as in Lemma \ref{Lemma1}. Then the sequence  $\{w_k\}$ generated by ASNTR algorithm satisfies  
$$\liminf_{k \to \infty} \|\nabla f(w_k)\|=0 \quad \mbox{a.s}.$$
\end{theorem}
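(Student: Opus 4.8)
The plan is to argue through the two exhaustive scenarios already isolated in the preceding lemmas, according to whether the full sample is ever reached: either $N_k<N$ for every $k$ (the mini-batch scenario, where \Cref{Lemma2new} applies), or $N_k=N$ from some finite iteration $\tilde k$ on (the deterministic scenario). In both I would use that $f$ is bounded below (\Cref{ass1}), that $\{f(w_k)\}$ is uniformly bounded above by \Cref{Lemma1}, and the standard trust-region estimate $|f_{\N_k}(w_t)-f_{\N_k}(w_k)-Q_k(p_k)|\le L\delta_k^2$, which follows from the $L$-Lipschitz continuity of $\nabla f_{\N_k}$ and $\|B_k\|\le L$, combined with the Cauchy decrease \eqref{eq.Cauchy}.

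In the mini-batch scenario, \Cref{Lemma2new} supplies an index $k_1$ with $\rho_{\D_k}\ge\nu$ for every $k\ge k_1$ and every realization of $\D_k$. Taking $|\D_k|=1$ as in that lemma, this reads $f_i(w_t)\le f_i(w_k)-\nu\|\nabla f_i(w_k)\|^2+\delta_k\tilde t_k$ for all $i$, and averaging over $i$ together with convexity of $\|\cdot\|^2$ would yield the key inequality
\[ f(w_t)\le f(w_k)-\nu\|\nabla f(w_k)\|^2+\delta_k\tilde t_k. \]
I would then split further. If infinitely many trial points are accepted, then on accepted steps $w_{k+1}=w_t$ obeys this bound while on rejected steps $f(w_{k+1})=f(w_k)$; telescoping and using $\sum_k\tilde t_k\le\tilde t<\infty$ from \eqref{eq.sumTtilda} with the lower bound on $f$ gives $\nu\sum_{\text{accepted}}\|\nabla f(w_k)\|^2<\infty$, hence $\liminf_k\|\nabla f(w_k)\|=0$. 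If instead only finitely many trial points are accepted, then from some index on every step is rejected; inspecting the update rules of ASNTR (lines 13--14 and 24--27) this freezes $\N_k$ and hence $w_k\equiv w^\ast$ while $\rho_{\N_k}<\eta<\eta_1$ drives $\delta_k\to0$. Passing to the limit in the displayed inequality at $w^\ast$, where $\|p_k\|\le\delta_k\to0$ gives $f(w^\ast+p_k)\to f(w^\ast)$ by continuity and $\delta_k\tilde t_k\to0$, would leave $\nu\|\nabla f(w^\ast)\|^2\le0$, so $\nabla f(w^\ast)=0$ (using $\nu>0$). In either case $\liminf_k\|\nabla f(w_k)\|=0$.

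In the deterministic scenario, for $k>\tilde k$ one has $g_k=\nabla f(w_k)$ and acceptance is governed by $\rho_{\N_k}\ge\eta$ alone, so ASNTR reduces to a non-monotone trust-region method with exact evaluations, and I would argue by contradiction assuming $\|\nabla f(w_k)\|\ge\epsilon_0>0$ for all large $k$. Combining the trust-region estimate with \eqref{eq.Cauchy} gives $\rho_{\N_k}-1\ge-2L\delta_k/(c\epsilon_0)$ whenever $\delta_k\le\epsilon_0/L$, so below a threshold $\bar\delta$ one has $\rho_{\N_k}\ge\eta_1$; such steps are accepted and never shrink the radius, which would make $\delta_k$ bounded below by some $\delta_{\min}>0$. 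A second contradiction argument then shows infinitely many very successful steps ($\rho_{\N_k}\ge\eta_1$) occur, since otherwise $\rho_{\N_k}<\eta_1$ eventually forces $\delta_k\to0$, contradicting the lower bound. On each such step $\eta_1>0$ and \eqref{eq.Cauchy} give a genuine decrease $f(w_{k+1})\le f(w_k)+\delta t_k-\kappa$ with $\kappa=\tfrac{c\eta_1}{2}\epsilon_0\min\{\delta_{\min},\epsilon_0/L\}>0$, while all other steps satisfy $f(w_{k+1})\le f(w_k)+\delta t_k$; summing and invoking $\sum_k t_k\le t<\infty$ from \eqref{eq.sumT} would send $f(w_k)\to-\infty$, contradicting boundedness from below.

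The main obstacle I expect, and the place where the additional sampling is indispensable, is the mini-batch scenario: because $\N_k$ and $w_t$ are dependent, $\rho_{\N_k}$ is a biased estimator and the acceptance test says nothing directly about the true objective $f$. \Cref{Lemma2new} is precisely what converts the per-sample guarantee $\rho_{\D_k}\ge\nu$ (valid for every realization, hence informative after averaging) into the decrease of $f$ recorded above, and the positivity of $\nu$ is what makes the stalling case conclusive. The remaining delicate points are comparatively routine once this bridge is in place: the non-monotone perturbations $t_k\delta_k$ and $\tilde t_k\delta_k$ must not overwhelm the telescoping, which the summability conditions \eqref{eq.sumT} and \eqref{eq.sumTtilda} guarantee, and the per-step decrease must be uniformly positive, which $\eta_1>0$ together with the Cauchy condition ensures.
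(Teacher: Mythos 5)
Your proposal is correct, and its skeleton — the same two exhaustive scenarios, the same reliance on \Cref{Lemma1} for the uniform upper bound and on \Cref{Lemma2new} to secure $\rho_{\D_k}\geq\nu$ for every realization eventually, and essentially the same contradiction argument in the deterministic scenario — coincides with the paper's proof. You deviate in two places inside the mini-batch scenario, both legitimately and arguably more cleanly. First, where the paper conditions on $\F_{k+1/2}$, applies Jensen's inequality to get $f(w_{k+1})\leq f(w_k)-\nu\|\nabla f(w_k)\|^2+\delta\tilde t_k$ in conditional expectation, and then passes through $\sum_j\mathbb{E}(\|\nabla f(w_{k_j})\|^2)<\infty$, Markov's inequality and Borel--Cantelli, you observe that since \Cref{Lemma2new} gives the inequality for \emph{every} realization of $\D_k$, one may simply average over $i$ and invoke convexity of $\|\cdot\|^2$ to obtain the same decrease of the true $f$ pathwise; this bypasses the probabilistic machinery entirely (the randomness is then confined to the event on which \Cref{Lemma2new}'s conclusion holds, which is where the ``a.s.'' qualifier lives). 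Second, in the sub-case where only finitely many trial points are accepted, the paper derives a contradiction by showing $\rho_{\N_k}\to 1$ while $\rho_{\N_k}<\eta<1/4$, which requires the lower bound $\|g_k\|\geq\epsilon h(\tilde N)$ supplied by line 7 of the algorithm; you instead pass to the limit $\delta_k\to 0$ in the averaged inequality at the frozen iterate $w^\ast$ and conclude $\nabla f(w^\ast)=0$ directly, which needs no lower bound on $\|g_k\|$ and even delivers a slightly stronger conclusion in that sub-case. Both variations are sound; the only cosmetic slips (dropping the $\delta_k t_k$ term from the numerator of the $|\rho_{\N_k}-1|$ estimate, which the summability of $t_k$ absorbs, and targeting $\rho_{\N_k}\geq\eta_1$ rather than the paper's $\rho_{\N_k}>3/4$) do not affect the argument.
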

\begin{proof} Assume two different scenarios with $N_k=N $ for all $k$ large enough, and $N_k<N$ for all $k$. Let us start with the first one in which $N_k=N$ for all $k \geq\tilde{k}$, where  $\tilde{k}$ is random but finite. In this case, ASNTR reduces to the non-monotone \textit{deterministic} trust-region algorithm applied on $f$. By $L$-Lipschitz continuity of $\nabla f$, we obtain 
\begin{equation}
        \label{5n1} 
        |f(w_k+p_k)- f(w_k)-\nabla^T f(w_k) p_k| \leq \frac{L}{2} \|p_k\|^2.
\end{equation}
Now, let us denote $\rho_k \triangleq \rho_{\N_k}$ and assume that $\|\nabla f(w_k)\| \geq \varepsilon >0$ for all $k\geq \tilde{k}$. Then, 
\begin{eqnarray} \label{5n2} 
    |\rho_k-1| &=& 
    \frac{|f(w_k+p_k)- f(w_k)-\delta_k t_k-\nabla^T f(w_k) p_k-0.5 p_k^T B_k p_k|}{|Q_k(p_k)|}\\\nonumber
    &\leq & \frac{0.5 L\|p_k\|^2 + \delta_k t_k+0.5 L \|p_k\|^2}{0.5 c  \|g_k\| \min \{\delta_k, \frac{\|g_k\|}{\|B_k\|}\}}\\\nonumber
    &\leq & \frac{L\|p_k\|^2 + \delta_k t_k}{0.5 c  \varepsilon \min \{\delta_k, \frac{\varepsilon}{L}\}},
\end{eqnarray}
where $\|g_k\| = \|\nabla f(w_k)\|$ in this scenario. Let us define $\tilde{\delta}=\frac{\varepsilon c }{20 L}$. Without loss of generality, given that the sequence $\{t_k\}$ is summable and hence $ t_k \to 0$ (see \eqref{eq.sumT}), we can assume that $t_k \leq L \tilde{\delta}$ for all $k\geq\tilde{k}$. Observe now the  iterations $ k $ for  $k > \tilde{k}$. If $\delta_k \leq \tilde{\delta}$, then $\delta_{k+1} \geq \delta_k$. This is due to the fact  $\tilde{\delta} \leq \frac{\varepsilon}{L}$,
\begin{eqnarray} \label{5n3} 
    |\rho_k-1| \leq 
     \frac{L\delta_k^2 + \delta_k t_k}{0.5 c  \varepsilon \delta_k}\leq  
     \frac{2 L\tilde{\delta} }{0.5 c  \varepsilon } < \frac{1}{4},
\end{eqnarray}
i.e., $\rho_k>\frac{3}{4}$ which implies that the NTR radius is not decreased; see lines 36-42 in ASNTR. Further, there exists $\hat{\delta}>0$ such that $\delta_k \geq \hat{\delta}$ for all $k \geq \tilde{k}$. Moreover, for all $k \geq\tilde{k}$, the assumption $\rho_k<\eta$, where $\eta<\eta_1$, would yield a contradiction since it would imply $\lim_{k \to \infty} \delta_k=0$. Therefore, there must exist an infinite set $K\subseteq \mathbb{N}$ as $K=\{ k \geq \tilde{k} \,| \,\rho_k \geq \eta \}$. For all $k \in K$, we have $$f(w_{k+1}) \leq f(w_k)+\delta_k t_k-\frac{c}{8}\|g_k\| \min\{\delta_k, \frac{\|g_k\|}{\|B_k\|}\}\leq f(w_k)+\delta_{max}t_k-\frac{c}{8} \varepsilon \min\{\hat{\delta}, \frac{\varepsilon}{L}\}.$$ Now, let $\bar{c} \triangleq\frac{c}{8} \varepsilon \min\{\hat{\delta}, \frac{\varepsilon}{L}\}$. Since $t_k$ tends to zero, we have $\delta_{max}t_k \leq \frac{\bar{c}}{2}$ for all $k$ large enough. Without loss of generality, we can say that this holds for all $k \in K$, rewriting $K=\{k_j\}_{j \in \mathbb{N}_0}$, we have $$f(w_{k_j+1})\leq f(w_{k_j})-\frac{\bar{c}}{2}.$$
Since $w_{k+1}=w_k$ for all $k\geq \tilde{k}$ and $k \notin K$, i.e. when $\rho_k < \eta$, we obtain 
$$f(w_{k_{j+1}})\leq f(w_{k_j})-\frac{\bar{c}}{2}.$$ 
Thus we obtain for all $j \in \mathbb{N}_0$ 
\begin{equation} \label{eq.inThe1rev1} 
     f(w_{k_{j}})\leq f(w_{k_0})- j \frac{\bar{c}}{2} \leq f(w_{\tilde{k}})+\delta_{max}\max \{t, \tilde{t}\}- j \frac{\bar{c}}{2},
\end{equation}
where the last inequality is due to \Cref{Lemma1}. Furthermore, applying the expectation and using the assumption $\mathbb{E}(f(w_{\tilde{k}})) \leq C$ we get 
 \begin{equation} \label{eq.inThe1} 
    \mathbb{E}_{A}(f(w_{k_{j}}))\leq C_1+\delta_{max}\max \{t, \tilde{t}\}-j \frac{\bar{c}}{2}.
\end{equation}
Letting $j$ tend to infinity in \eqref{eq.inThe1}, we obtain $\lim_{j \to \infty} \mathbb{E}_{A}(f(w_{k_j}))=-\infty$, which is in contradiction with the assumption of $f$ being bounded from below. Therefore,  $\|\nabla f(w_k)\| \geq \varepsilon >0$ for all $k\geq \tilde{k}$ can not be true, so we conclude that $$\liminf_{k \to \infty} \|\nabla f(w_k)\|=0.$$

Now let us consider the mini-batch scenario, i.e., $N_k < N$ for all $k$, i.e., the sample size is increased only finitely many times. According to Lemma \ref{Lemma2new} and lines 7-8 of the algorithm,  
the currently considered scenario implies the existence of a finite $\tilde{k}_1$ such that 
\begin{equation} \label{5new1} N_k=\tilde{N}, \quad \|g_k\| \geq \epsilon h(N_k)\triangleq\epsilon_{\tilde{N}}>0 \quad \mbox{and} \quad \rho_{\D_k}\geq \nu,\end{equation}
for all $k\geq \tilde{k}_1$ and some $\tilde{N}<N.$ Now, let us prove that there exists an infinite subset of iterations $\tilde{K} \subseteq \mathbb{N}$ such that  $\rho_{k}\geq \eta$ for all $k \in \tilde{K}$, i.e., the trial point $w_t$ is accepted infinitely many times. Assume a contrary, i.e., there exists some finite $\tilde{k}_2$ such that $\rho_{k}<\eta$ for all $k \geq \tilde{k}_2$. Since $\eta < \eta_1$, this further implies that $\lim_{k \to \infty}\delta_k=0$; see lines 36 and 37 in \Cref{alg_ASNTR}. Moreover, line 13 of \Cref{alg_ASNTR} implies that the sample does not change, meaning that there exists $\tilde{\N}\subset \N$ such that $\N_k=\tilde{\N}$ for all $k \geq \tilde{k}_3\triangleq\max\{\tilde{k}_1, \tilde{k}_2\}$. By $L$-Lipschitz continuity of $\nabla f_{\tilde{\N}}$, we obtain 
    \begin{equation}
        \label{5new2} 
        |f_{\tilde{\N}}(w_k+p_k)- f_{\tilde{\N}}(w_k)-\nabla^T f_{\tilde{\N}}(w_k) p_k| \leq \frac{L}{2} \|p_k\|^2.
    \end{equation}
    For every $k \geq \tilde{k}_3$, then we have 
    \begin{eqnarray} \label{5new3} 
    |\rho_{k}-1| &=& 
    \frac{|f_{\tilde{\N}}(w_k+p_k)- f_{\tilde{\N}}(w_k)-\delta_k t_k-\nabla^T f_{\tilde{\N}}(w_k) p_k-0.5 p_k^T B_k p_k|}{|Q_k(p_k)|}\\\nonumber
    &\leq & \frac{0.5 L\|p_k\|^2 + \delta_k t_k+0.5 L \|p_k\|^2}{0.5 c \|g_k\| \min \{\delta_k, \frac{\|g_k\|}{\|B_k\|}\}}\\\nonumber
    &\leq & \frac{L\delta_k^2 + \delta_k t_k}{0.5 c \epsilon_{\tilde{N}} \min \{\delta_k, \frac{\varepsilon}{L}\}}.
    \end{eqnarray}
Since $\lim_{k \to \infty}\delta_k=0$, there exists $\tilde{k}_4$ such that for all $k \geq \tilde{k}_4$ we obtain 
$$|\rho_{k}-1|\leq \frac{L\delta_k^2 + \delta_k t_k}{0.5 c \epsilon_{\tilde{N}}  \delta_k}=\frac{L\delta_k + t_k}{0.5 c  \epsilon_{\tilde{N}} }.$$
Due to the fact that $t_k$ tends to zero, we obtain that $\lim_{k \to \infty} \rho_{k}=1$ which is in contradiction with $\rho_{k} <\eta <1/4.$ Thus, we conclude that there must exist $\tilde{K} \subseteq \mathbb{N}$ such that $\rho_{k}\geq \eta$ for all $k \in \tilde{K}$. Let us define $K_1 \triangleq \tilde{K}\cap \{\tilde{k}_1,\tilde{k}_1+1,\ldots\}$. Notice that we have $\rho_{\D_k}\geq \nu$ and $\rho_{k}\geq \eta$ for all $k \in K_1$. Thus, due to Lemma \ref{Lemmafdescent1},  the following holds for all $k \in K_1$
   \begin{equation} \label{p9}
   \begin{split}
        f(w_{k+1})=f(w_t) 
        &\leq f(w_k)-\nu\|\nabla f(w_{k})\|^2 + \delta_k \tilde{t}_k, \\
        &\leq f(w_k)-\nu\|\nabla f(w_{k})\|^2 + \delta_{max} \tilde{t}_k.
   \end{split}
    \end{equation}
    
Notice that $w_{k+1}=w_k$ in all other iterations $k \geq \tilde{k}_1$ and $k \notin K_1$. Rewriting $K_1=\{k_j\}_{j \in \mathbb{N}_0}$, for all $j \in \mathbb{N}_0$, we obtain
    $$ f(w_{k_{j+1}})=f(w_{k_{j}+1})
    \leq f(w_{k_j})-\nu  \|\nabla f(w_{k_j})\|^2+ \delta_{max} \tilde{t}_{k_j}.$$
    Then, due to the summability of the sequeence $\{\tilde{t}_k\}$ in \eqref{eq.sumTtilda}, and Lemma \ref{Lemma1}, the following holds for any $s \in \mathbb{N}_0$
    \begin{eqnarray*}
        \begin{split}     
            f(w_{k_{s+1}})
            &\leq f(w_{k_0})-\nu \sum_{j=0}^{s}\|\nabla f(w_{k_j})\|^2+ \delta_{max} \tilde{t} \\
            &\leq f(w_{\tilde{k}})+\delta_{max}\max \{t,\tilde{t}\}-\nu  \sum_{j=0}^{s}\|\nabla f(w_{k_j})\|^2+ \delta_{max} \tilde{t}.
        \end{split}
    \end{eqnarray*}

    Now, applying the expectation, using $\mathbb{E} (f(w_{\tilde{k}}))\leq C$ which implies $\mathbb{E}_{\bar{A}} (|f(w_{\tilde{k}})| )\leq C_2$, and the boundedness of $f$ from below,  letting $s$ tend to infinity in the above inequality yields 
    $$ \sum_{j=0}^{\infty} \mathbb{E}_{\bar{A}}(\|\nabla f(w_{k_j})\|^2) <\infty.$$
    Moreover, the following holds as a consequence of an extended version of Markov's inequality with the  specific choice of the nonnegative function $\Phi(y)=y^2$ nondecreasing on $ y\geq 0$: for any $\epsilon>0$
    $$ P_{\bar{A}}(\|\nabla f(w_{k_j})\| \geq \epsilon)\leq \frac{\mathbb{E}_{\bar{A}}(\Phi(\|\nabla f(w_{k_j})\|))}{\Phi(\epsilon)} = \frac{\mathbb{E}_{\bar{A}}(\|\nabla f(w_{k_j})\|^2)}{\epsilon^2}.$$
    Therefore, we have
    $$ \sum_{j=0}^{\infty} P_{\bar{A}}(\|\nabla f(w_{k_j})\| \geq \epsilon)< \infty.$$
    Finally, Borel-Cantelli Lemma implies that (in the current scenario) almost surely $ \lim_{j \to \infty} \|\nabla f(w_{k_j})\|=0$. 
    Combining all together, the result follows as in both scenarios we have at least $$\liminf_{k \to \infty} \|\nabla f(w_{k})\|=0 \quad \mbox{a.s}.$$
    
\end{proof}
 
Finally, we analyze the convergence rate of the proposed algorithm for a restricted class of problems stated in the following theorem. 
\begin{theorem} \label{Theorem1convrate} 
Suppose that the assumptions of Theorem \ref{Theorem1} hold and that $\nu <L/2$. Moreover, suppose that $f $ is $m$-strongly convex and the sequence $\{\tilde{t}_k\}$ converges to zero R-linearly. If $N_k < N$ for all $k \in \mathbb{N}$, then the sequence  $\{w_k\}_{k \in K_1}$ with  $K_1$  as in \eqref{p9} converges  to the unique minimizer $w^*$ of  $f$ R-linearly in the  mean squared sense.   
\end{theorem}
\begin{proof}
    Considering the mini-batch scenario in the proof of the previous theorem, we obtain 
    $$ f(w_{k_{j+1}})
    \leq f(w_{k_j})-\nu  \|\nabla f(w_{k_j})\|^2+ \delta_{max} \tilde{t}_{k_j},$$
    for all $j \in \mathbb{N}$. Moreover, the strong convexity of $f$ implies 
    $$ f(w_{k_{j+1}})-f(w^*)
    \leq f(w_{k_j})-f(w^*)-\nu \frac{2}{L} (f(w_{k_j})-f(w^*))+ \delta_{max} \tilde{t}_{k_j},$$
    for all $j \in \mathbb{N}$.
    Now,  applying the expectation and defining
    $e_j:=\mathbb{E}_{\bar{A}}(f(w_{k_j})-f(w^*)),$ 
    from the previous inequality we obtain 
    $$ e_{j+1}
    \leq \theta e_j+ \varepsilon_j,$$
    where $ \varepsilon_j:=\delta_{max} \tilde{t}_{k_j}$ and $\theta:=1-2 \nu  /L \in (0,1)$ according to the assumption on $\nu$. Since the previous inequality holds for each $j$, we conclude that for all $j \in \mathbb{N}$ there holds
    $$e_j\leq \theta^j e_0+s_j,$$
    where $s_j:=\delta_{max}\sum_{i=1}^{j} \theta^{i-1} \varepsilon_{j-i}.$
    Notice that $\{\varepsilon_j\}$ converges to zero R-linearly according to the assumption on $\tilde{t}_k$. This further implies that $s_j$ converges to zero R-linearly (see Lemma 4.2 in \cite{krejic2015nonmonotone}). Thus, we conclude that  $\{e_j\}$ converges to zero R-linearly. 
    Finally, since the strong convexity also implies 
    $$\frac{m}{2} \mathbb{E}_{\bar{A}}(\|w_{k_j}-w^*\|^2)\leq e_j,$$ we obtain the result. 
\end{proof}

\section{Numerical experiments}\label{Num}               
In this section, we provide some experimental results to make a comparison between ASNTR and STORM (as Algorithm 5 in \cite{chen2018stochastic}). We examine the performance of both algorithms for training DNNs in two types of problems: (i) Regression with synthetic \texttt{DIGITS} dataset and (ii) Classification with \texttt{MNIST} and \texttt{CIFAR10} datasets.
\footnote{
Available online: \url{https://www.kaggle.com/datasets/hojjatk/mnist-dataset } (\texttt{MNIST}) \url{https://www.mathworks.com/help/deeplearning/ug/data-sets-for-deep-learning.html} (\texttt{DIGITS} and \texttt{CIFAR10})}

We also provide additional results to give more insights into the behavior of the ASNTR algorithm, especially concerning the sampling strategy. All experiments were conducted with the MATLAB DL toolbox on an Ubuntu 20.04.4 LTS (64-bit) Linux server VMware with 20GB memory using a VGPU NVIDIA A100D-20C. 
\subsection{Experimental configuration}\label{body3a}
All three image datasets are divided into training and testing sets including $N$ and $\hat{N}$ samples, respectively, and whose pixels in the range $[0, 255]$ are divided by 255 so that the pixel intensity range is bounded in $[0, 1]$ (zero-one rescaling). To define the initialized networks and training loops of both algorithms, we have applied \texttt{dlarray} and \texttt{dlnetwork} MATLAB objects.\footnote{A MATLAB-based tutorial on implementing custom loops for training a deep neural network is available here: \url{http://doi.org/10.13140/RG.2.2.33008.94720/2}} The networks' parameters in $w\in\mathbb{R}^n$ are initialized by the Glorot (Xavier) initializer \cite{glorot2010understanding} and zeros for respectively weights and biases of convolutional layers as well as ones and zeros respectively for scale and offset variables of batch normalization layers. \Cref{tab1} describes the hyper-parameters applied in both algorithms.

Since ASNTR and STORM allow the use of any Hessian approximations, the underlying quadratic model can be constructed by exploiting a Quasi-Newton update for $B_k$. Quasi-Newton updates aim at constructing Hessian approximations using only gradient information and thus incorporating second-order information without computing and storing the true Hessian matrix. We have considered the \textbf{L}imited \textbf{M}emory \textbf{S}ymmetric \textbf{R}ank \textbf{one} (L-SR1) update formula to generate $B_k$ rather than other widely used alternatives such as limited memory Broyden-Fletcher-Goldfarb-Shanno (L-BFGS). The L-SR1 updates might better navigate the pathological saddle points present in the non-convex optimization found in DL applications. Given $B_0 = \gamma_ k I$ at iteration $k$, and curvature pair $(s_k, y_k)=(w_t-w_k, g_t - g_k)$ where $g_k \triangleq \nabla f_{\N_k}(w_k)$ and $g_t \triangleq \nabla f_{\N_k}(w_t)$ provided that $(y_k - B_ks_k)^Ts_k\neq0$, the SR1 updates are obtained as follows
\begin{equation}\label{eq.sr1}
B_{k+1} = B_k + \dfrac{(y_k - B_k s_k)(y_k - B_k s_k)^T}{(y_k - B_k s_k)^T s_k}, \quad k=0,1,\ldots.
\end{equation}
Using two limited memory storage matrices $S_k$ and $Y_k$ with at most $l \ll n$ columns for storing the recent $l$ pairs $\left\{s_j, y_j\right\}$ \; $ j \in \{1,\ldots,l\}$, the compact form of L-SR1 updates can be written as 
\begin{equation}\label{eq.CompactSR1}
B_k = B_0 + \Psi_k  M_k \Psi_k^T, \quad k = 1,2,,\ldots,
\end{equation}
where 
$$\Psi_k   = Y_k - B_0S_k,\qquad M_k  = (D_k + L_k + L_k^T - S_k^T B_0 S_k)^{-1}$$ with matrices $L_k$ and $D_k$ which respectively are the strictly lower triangular part and the diagonal part of $S_k^TY_k$; see \cite{nocedal2006numerical}. Regarding the selection of the variable multiplier $\gamma_k$ in $B_0$, we refer to the initialization strategy proposed in \cite{erway2020trust}. Given the quadratic model $Q_k(p)$ using L-SR1 in ASNTR and STORM, we have used an efficient algorithm called OBS \cite{brust2017solving}, exploiting the structure of the L-SR1 matrix \eqref{eq.CompactSR1} and the Sherman-Morrison-Woodbury formula for inversions to obtain $p_k$.  

We have randomly (without replacement) chosen the index subset $\N_k \subseteq \{1,2,\dots, N\}$ to generate a mini-batch of size $N_k$ for computing the required quantities, i.e., subsampled functions and gradients. Given $N_0 = d+1$ where $d$ is the dimension of a single input sample $x_i \in \mathbb{R}^d$, we have adopted the linearly increased sampling rule that $N_k = \min( N, \max(100k + N_0, \lceil\frac{1}{{\delta_k}^2}\rceil)$ for STORM as in \cite{chen2018stochastic} while we have exploited the simple following sampling rule for ASNTR when the increase is necessary 
 \begin{equation}\label{eq.adSampling}
     N_{k+1}= \lceil 1.01 N_k\rceil,
 \end{equation}
 otherwise $N_{k+1}=N_k$. Using the non-monotone TR framework in our algorithm, we set $t_k = \frac{C_1}{(k+1)^{1.1}}$ and $\tilde{t}_k = \frac{C_2}{(k+1)^{1.1}}$ for some $C_1, C_2 > 0$ in \eqref{eq.r_N} and \eqref{eq.r_D}, respectively. We have also selected $\D_k$ with cardinality 1 at every single iteration in ASNTR.

In our implementations, each algorithm was run with 5 different initial random seeds. The criteria of both algorithms' performance (accuracy and loss) are compared against the number of gradient calls ($N_g$) during the training phase. The algorithms were terminated when $N_g$ reached the determined budget of the gradient evaluations ($N_g^{\text{max}}$). Each network is trained by ASNTR and STORM; then the trained network is used for the prediction of the testing dataset. Notice that the training loss and accuracy are the subsampled function's value and the number of correct predictions in percentage with respect to the given mini-batch.

\begin{table}[h]
\caption{\small Hyper-parameters}
\small
\centering
\begin{tabular}{||l|l||}\hline\hline
\multicolumn{1}{||c||}{\textbf{STORM}}\\\hline
{$\delta_0=1$,\, $\delta_{max}=10$, \,$l = 30$, \,$\eta_1=10^{-4}$,\, $\eta_2=10^{-3}$,\, $\gamma=2$}\\\hline
\multicolumn{1}{||c||}{\textbf{ASNTR}}\\\hline
{$\delta_0=1$,\, $\delta_{max}=10$, \,$l = 30$, \,$\eta=\nu=10^{-4}$,\, $\eta_1=0.1$,\, $\eta_2=0.75$,\, $\tau_1=0.5$,\, $\tau_2=0.8$,\,$\tau_3=2$}\\\hline\hline
\end{tabular}
\label{tab1}
\end{table}

\begin{table}[h]
\caption{\small Architectures of the networks.}
\footnotesize
\centering
\begin{tabular}{|l|l|}
\hline
{Regression} & \\
\hline
& $(Conv(3\times3@8,1,\text{same}){/}BN{/}ReLU{/}AvgPool(2\times2,2,0))$ \\
& $(Conv(3\times3@16,1,\text{same}){/}BN{/}ReLU{/}AvgPool(2\times2,2,0))$ \\
\textbf{\texttt{CNN-Rn}} & $(Conv(3\times3@32,1,\text{same}){/}BN{/}ReLU)$ \\
& $(Conv(3\times3@32,1,\text{same}){/}BN{/}ReLU{/}DropOut(0.2))$ \\
& $FC(1)$ \\
\hline\hline
{Classification} & \\
\hline
& $(Conv(3\times3@16,1,1){/}BN{/}ReLU)$ \\
& $B_1\begin{cases}
(Conv(3\times3@16,1,1){/}BN{/}ReLU)\\
(Conv(3\times3@16,1,1){/}BN) + \text{addition}(1){/}ReLU
\end{cases}$ \\
& $B_2\begin{cases}
(Conv(3\times3@16,1,1){/}BN{/}ReLU)\\
(Conv(3\times3@16,1,1){/}BN) + \text{addition}(1){/}ReLU
\end{cases}$ \\
& $B_3\begin{cases}
(Conv(3\times3@16,1,1){/}BN{/}ReLU)\\
(Conv(3\times3@16,1,1){/}BN) + \text{addition}(1){/}ReLU
\end{cases}$ \\
& $B_1\begin{cases}
(Conv(3\times3@32,2,1){/}BN{/}ReLU)\\
(Conv(3\times3@32,1,1){/}BN)\\
(Conv(1\times1@32,2,0){/}BN) + \text{addition}(2){/}ReLU
\end{cases}$ \\
\textbf{\texttt{ResNet-20}} & $B_2\begin{cases}
(Conv(3\times3@32,1,1){/}BN{/}ReLU)\\
(Conv(3\times3@32,1,1){/}BN) + \text{addition}(1){/}ReLU 
\end{cases}$ \\
& $B_3\begin{cases}
(Conv(3\times3@32,1,1){/}BN{/}ReLU)\\
(Conv(3\times3@32,1,1){/}BN) + \text{addition}(1){/}ReLU
\end{cases}$ \\
& $B_1\begin{cases}
(Conv(3\times3@64,2,1){/}BN{/}ReLU)\\
(Conv(3\times3@64,1,1){/}BN)\\
(Conv(1\times1@64,2,0){/}BN) + \text{addition}(2){/}ReLU\\
\end{cases}$ \\
& $B_2\begin{cases}
(Conv(3\times3@64,1,1){/}BN{/}ReLU)\\
(Conv(3\times3@64,1,1){/}BN) + \text{addition}(1){/}ReLU
\end{cases}$ \\
& $B_3\begin{cases}
(Conv(3\times3@64,1,1){/}BN{/}ReLU)\\
(Conv(3\times3@64,1,1){/}BN) + \text{addition}(1){/}gAvgPool{/}ReLU)
\end{cases}$ \\
& $FC(C{/}Softmax)$ \\
\hline\hline
{Classification} & \\
\hline
& $(Conv(5\times5@20,1,0){/}ReLU{/}MaxPool(2\times2,2,0))$ \\
\textbf{\texttt{LeNet-like}} & $(Conv(5\times5@50,1,0){/}ReLU{/}MaxPool(2\times2,2,0))$ \\
& $FC(500{/}ReLU)$ \\
& $FC(C{/}Softmax)$ \\
\hline
\multicolumn{2}{l}{
\begin{minipage}{13cm}
\vspace{1ex}
 \textit{\textbf{TABLE'S NOTES:}} \small{See \cite{Goodfellow, he2016deep} for more details about the different layers in a deep neural network. The compound $(Conv(5\times5@32,\,1,2){/}BN{/}ReLu{/}MaxPool(2\times2,\,1,0)))$ indicates a simple convolutional network (ConvNet) including a convolutional layer ($Conv$) using $32$ filters of size $5\times 5$, stride $1$, padding $2$, followed by a batch normalization layer ($BN$), a nonlinear activation function ($ReLu$) and, finally, a 2-D max-pooling layer with a channel of size $2\times2$, stride 1 and padding 0. The syntax $FC(C{/}Softmax)$ denotes a layer of $C$ fully connected neurons followed by the $softmax$ layer. Moreover, $(AvgPool)$, $(gAvg.Pool)$, and $(DropOut)$ refer to the 2D average-pooling, global average-pooling, and drop-out layers, respectively. The syntax $addition(1){/}ReLu$ indicates the existence of an \textit{identity shortcut} with functionality such that the output of a given block, say $B_1$ (or $B_2$ or $B_3$), is directly fed to the $addition$ layer and then to the ReLu layer while $addition(2){/}ReLu$ in a block shows the existence of a \textit{projection shortcut} with functionality such that the output from the two first ConvNets is added to the output of the third ConvNet and then the output is passed through the $ReLu$ layer. An open-source implementation of the \texttt{ResNet-20} and \texttt{LeNet-like} networks described above as components in Matlab programs of algorithms presented in \cite{yousefi2023deep} is available on \url{https://github.com/MATHinDL/sL_QN_TR/}.})
\end{minipage}}
\end{tabular}
\label{Nets}
\end{table}
\subsection{Classification problems}\label{body3b}

To solve an image classification problem for images with unknown classes/labels, we need to seek an optimal classification model by using a $C$-class training dataset $\{(x_i,y_i)_{i=1}^{N}\}$ with an image $x_i \in \mathbb{R}^{d}$ and its one-hot encoded label $y_i \in \mathbb{R}^C$. To this end, the generic DL problem \eqref{eq.problem} is minimized, where its single loss function $f_i = L(y_i, h(x_i; .))$ is \textit{softmax cross-entropy} as follows
\begin{equation}\label{single_loss_class}
	f_i(w) = - \sum_{k=1}^{C} (y_i)_k \log (h(x_i; w))_k,\quad i=1,\dots,N.
\end{equation}
In \eqref{single_loss_class}, the output $h(x_i; w)$ is a prediction provided by a DNN whose last layer includes the softmax function. For this classification task, we have considered two types of networks, the \texttt{LeNet-like} network with a shallow structure inspired by \texttt{LeNet-5} \cite{lecun1998gradient}, and the modern residual network \texttt{ResNet-20} \cite{he2016deep} with a deep structure. See \Cref{Nets} for the network's architectures. We have also considered the two most popular benchmarks; the \texttt{MNIST} dataset \cite{lecun1998mnist} with $7\times 10^4$ samples of handwritten gray-scale digit images of $28\times 28$ pixels and the \texttt{CIFAR10} dataset \cite{krizhevsky2009learning} with $6\times 10^4$ RGB images of $32\times32$ pixels, both in 10 categories. Every single image of \texttt{MNIST} and \texttt{CIFAR10} datasets is defined as a 3-D numeric array $x_i \in \mathbb{R}^d$ where $d = {28 \times 28 \times 1}$ and $d = {32 \times 32 \times 3}$, respectively. Moreover, every single label $y_i$ must be converted into a one-hot encoded label as $y_i \in \mathbb{R}^C$, where $C=10$. In both datasets, $\hat{N}=10^4$ images are set aside as testing sets, and the remaining $N$ images are set as training sets. Besides the zero-one rescaling, we implemented z-score normalization to have zero mean and unit variance. Precisely, all $N$ training images undergo normalization by subtracting the mean (an $h \times w \times c$ array) and dividing by the standard deviation (an $h \times w \times c$ array) of training images as an array sized $h \times w \times c \times N$. Here, $h$, $w$, and $c$ denote the height, width, and number of channels of the images, respectively, while $N$ represents the total number of images. Test data are also normalized using the same parameters as in the training data.\footnote{Mathematically, we have already denoted $i$th image of dimension $d$ as $x_i\in\mathbb{R}^d$ where $d = h\times w\times c$.} 

\Cref{Fig_Acc_Mnist_magnified,Fig_Acc_Cifar_magnified,Fig_Loss_Mnist_magnified,Fig_Loss_Cifar_magnified} 
show the variations, the mean and standard error obtained from 5 separate runs, of the aforementioned measures for both train and test datasets of ASNTR with $C_2=1, 10^2,10^8$ in $\rho_{\D_k}$, and $C_1=1$ in $\rho_{\N_k}$ within fixed budgets of gradient evaluations $N_g^{\text{max}}=6\times 10^5$ for \texttt{MNIST} and $N_g^{\text{max}}=9\times 10^6$ for \texttt{CIFAR10}. These figures demonstrate that ASNTR achieves higher training and testing accuracy than STORM in all considered values of $C_2$ except in Fig.~\ref{Fig_Acc_Cifar_magnified} with $C_2=1$ by which ASNTR can be comparable with STORM. Nevertheless, ASNTR is capable of achieving higher 
\begin{figure}[t]
    \centering
    \includegraphics[width=3.5cm, height=5.7cm]{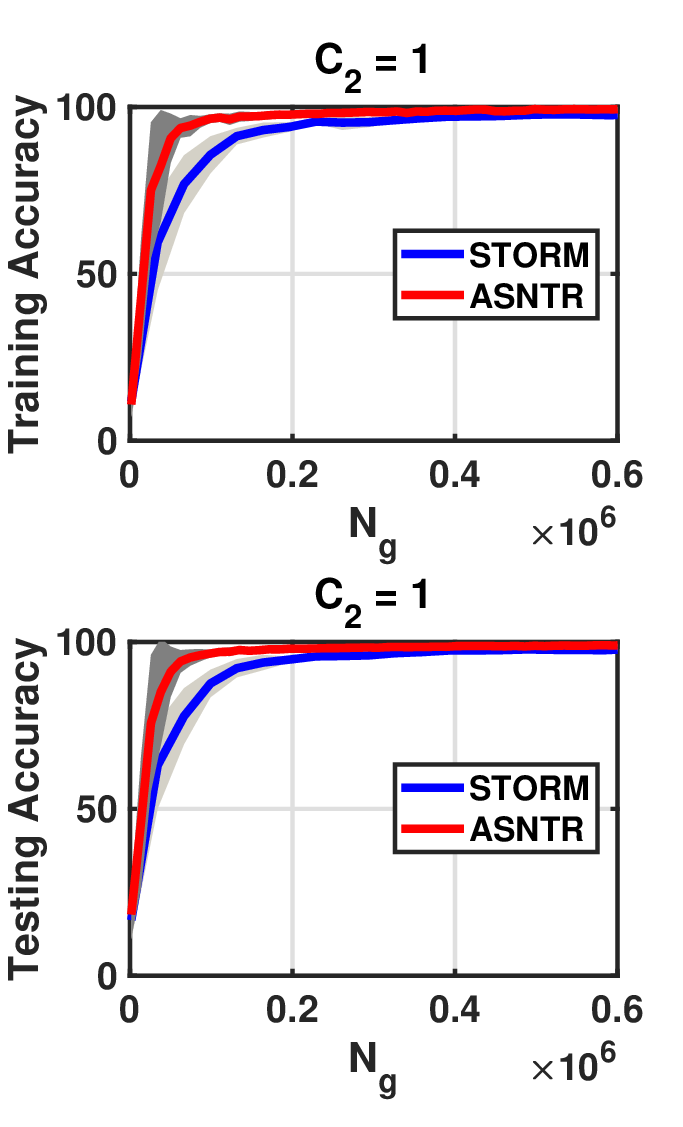}
    \includegraphics[width=3.5cm, height=5.7cm]{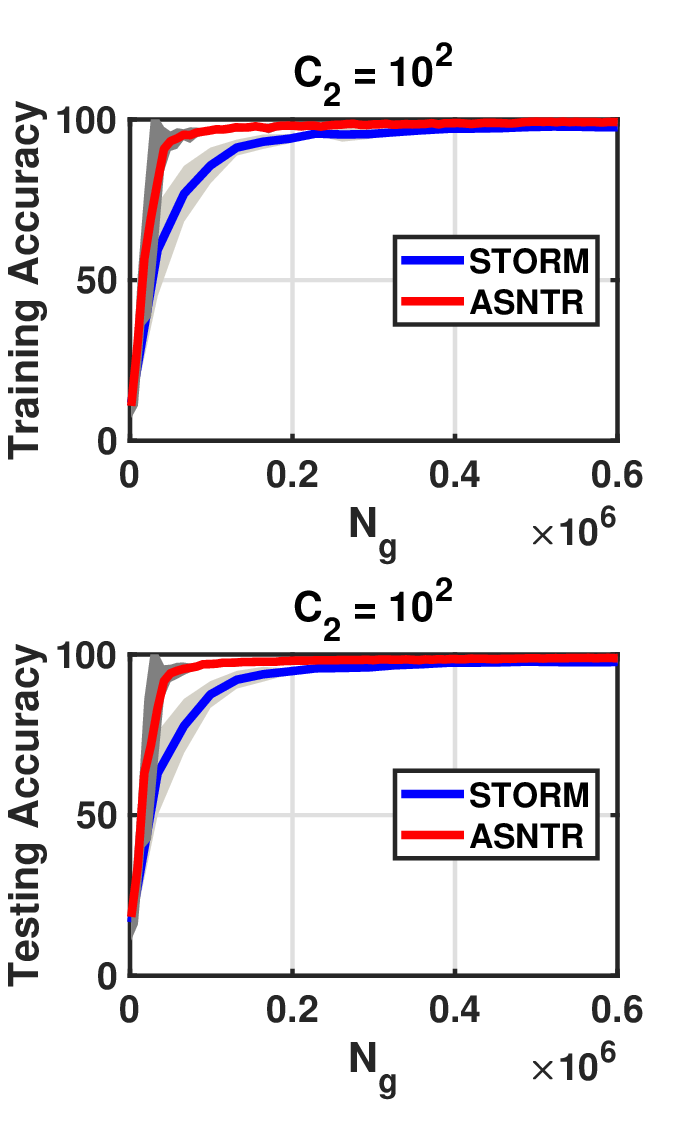}
    \includegraphics[width=3.5cm, height=5.7cm]{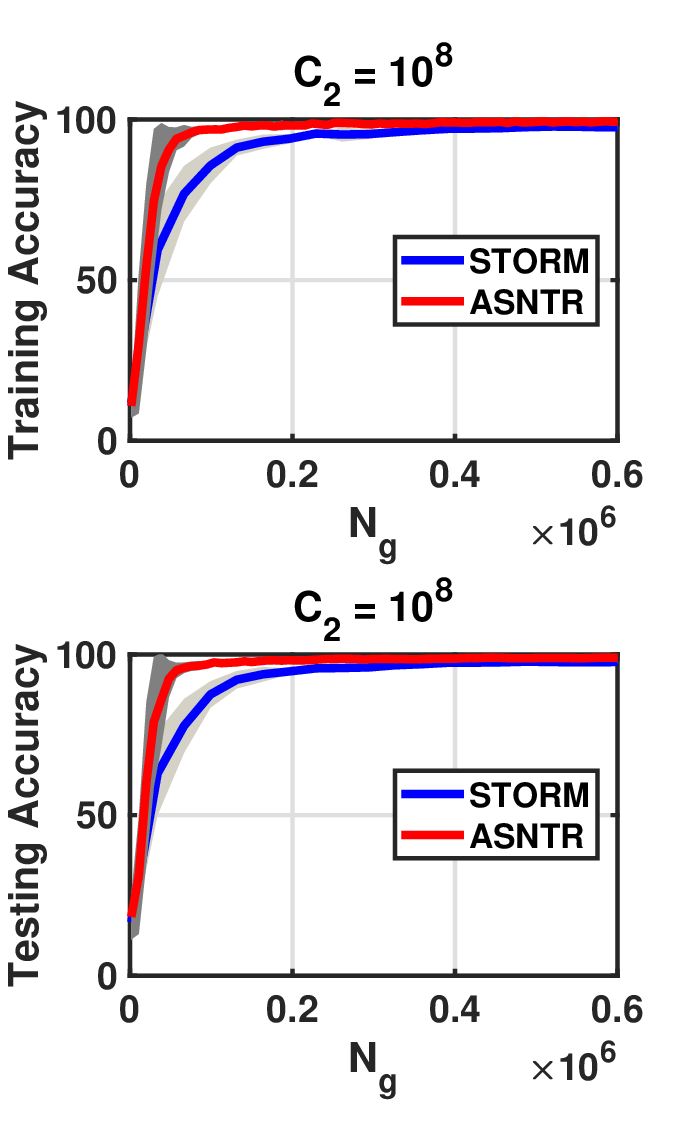}
    \caption{\small The accuracy variations of STORM and ASNTR on \texttt{MNIST}.}
    \label{Fig_Acc_Mnist_magnified}
\end{figure}
\vspace{-45pt}
\begin{figure}[H]
    \centering
    \includegraphics[width=3.5cm, height=5.7cm]{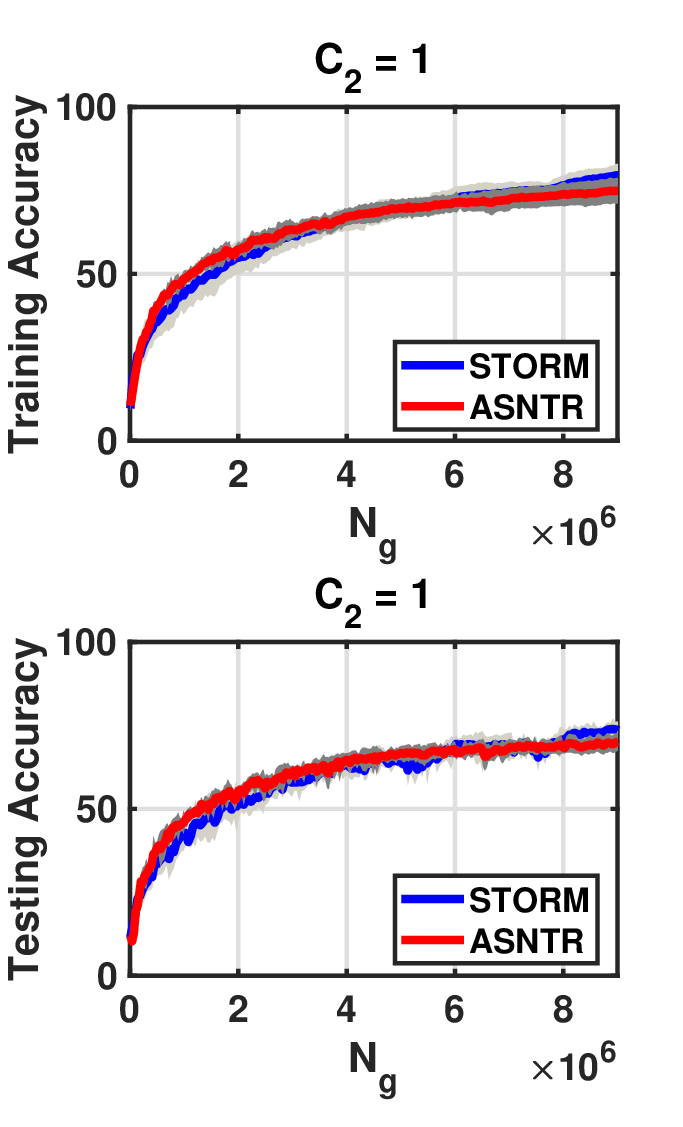}
    \includegraphics[width=3.5cm, height=5.7cm]{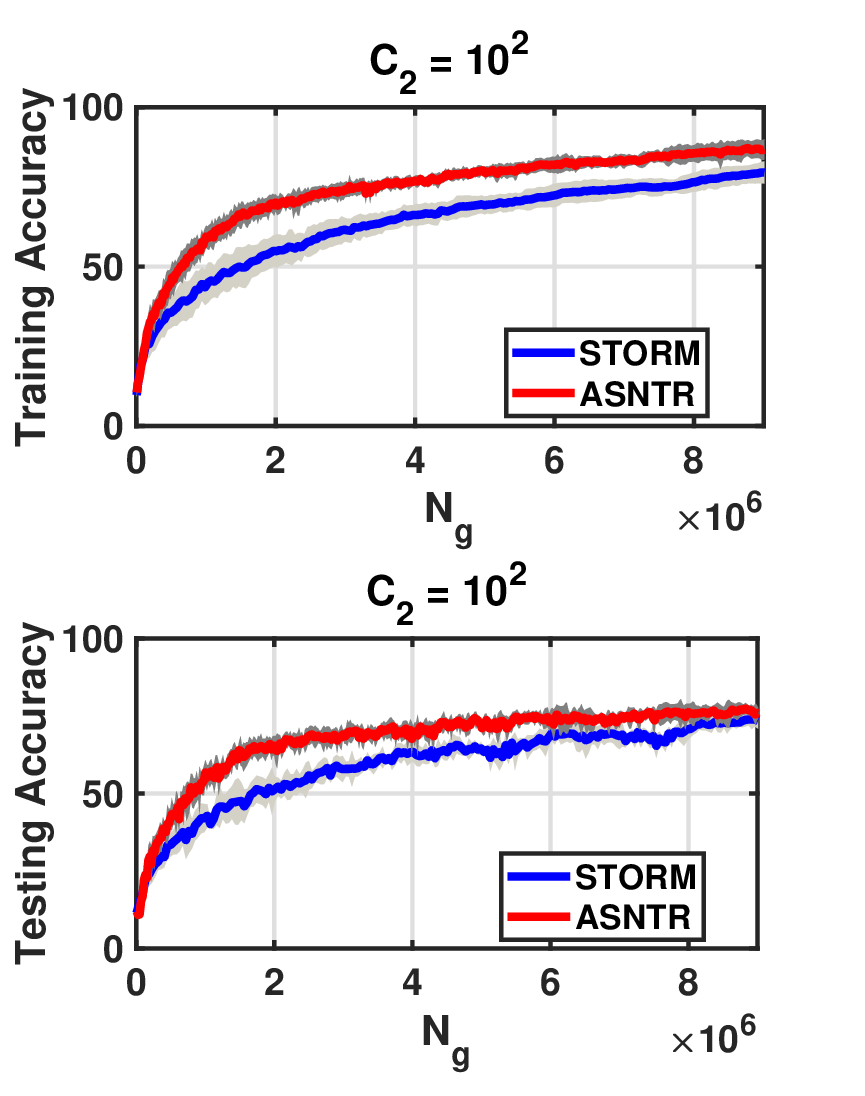} 
    \includegraphics[width=3.5cm, height=5.7cm]{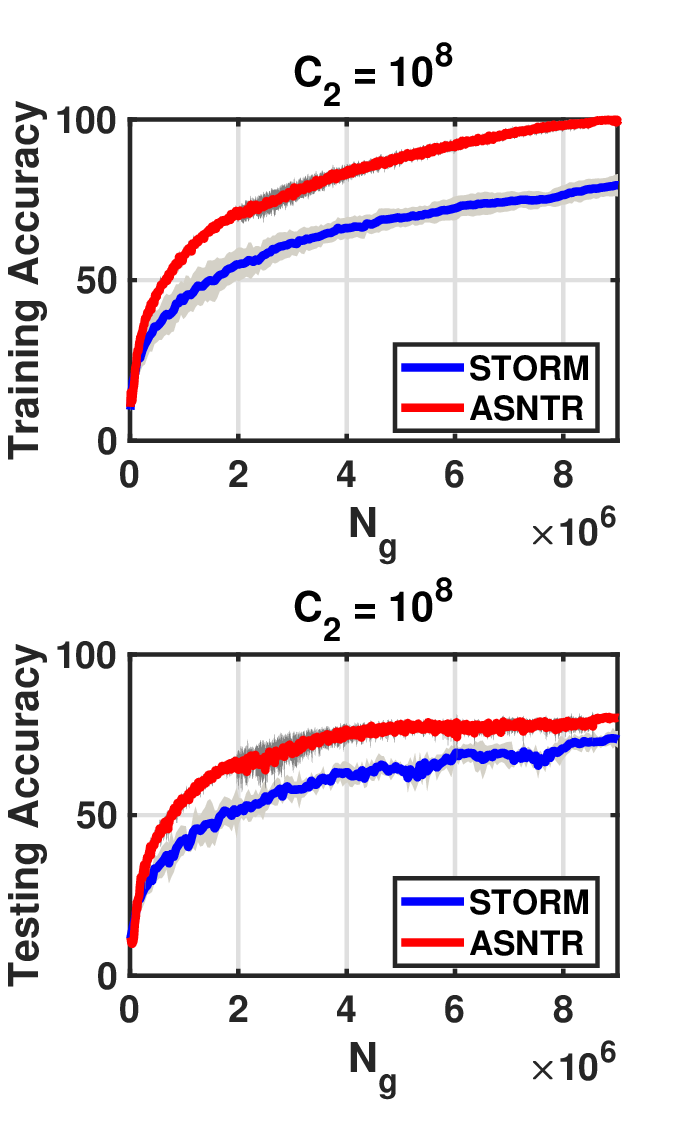}
    \caption{\small The accuracy variations of STORM and ASNTR on \texttt{CIFAR10}.}
    \label{Fig_Acc_Cifar_magnified}
\end{figure}
\begin{figure}[t]
    \centering
    \includegraphics[width=3.5cm, height=5.75cm]{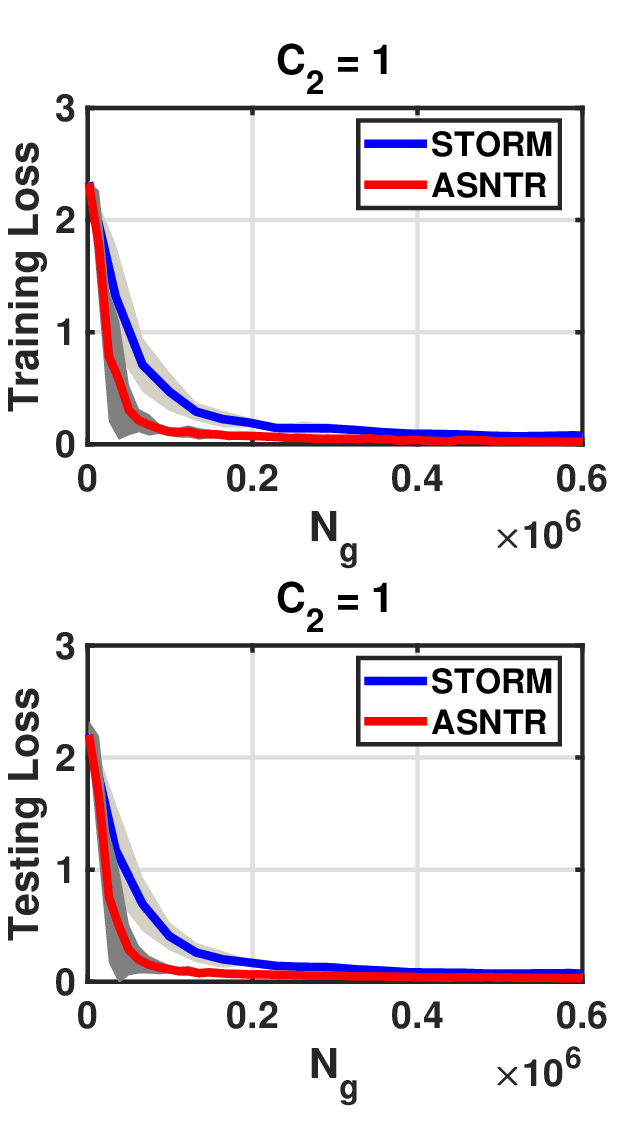}
    \includegraphics[width=3.5cm, height=5.75cm]{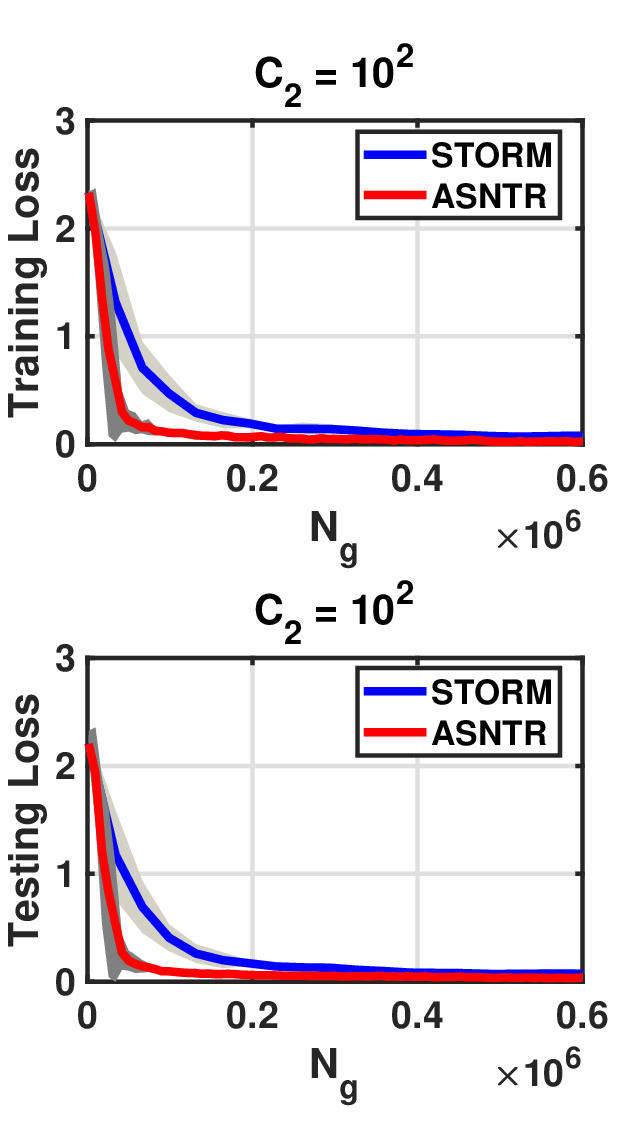}
    \includegraphics[width=3.5cm, height=5.75cm]{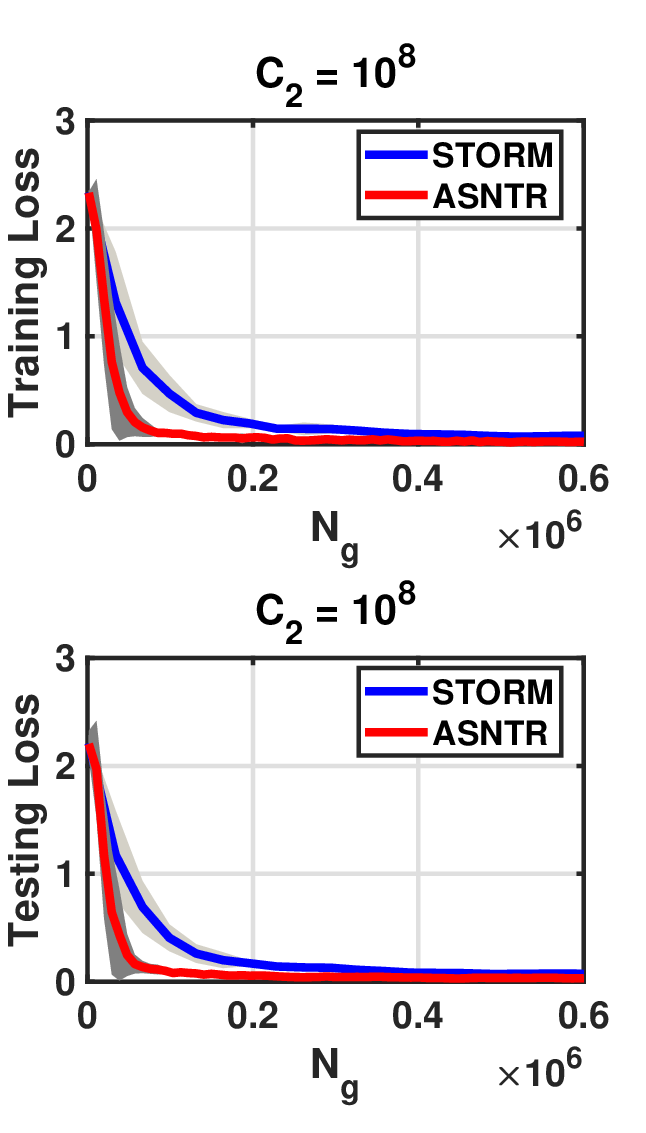}
    \caption{\small The loss variation of STORM and ASNTR on \texttt{MNIST}.}
    \label{Fig_Loss_Mnist_magnified}
\end{figure}

\noindent accuracy (lower loss) with fewer $N_g$ in comparison with STORM. Moreover, these figures indicate the robustness of ASNTR for larger values of $C_2$ meaning higher rates of non-monotonicity.
\subsection{Regression problem}\label{body3c} 

This example shows how to fit a regression model using a neural network to be able to predict the angles of rotation of handwritten digits which is useful for optical character recognition. To find an optimal regression model, the generic problem \eqref{eq.problem} is minimized, where $f_i = L(y_i, h(x_i; .))$ with a predicted output $h(x_i; w)$ is \textit{half-mean-squared error} as follows
\begin{equation}\label{single_loss_reg}
	f_i(w) = - \frac{1}{2} ( y_i - h(x_i; w) )^2,\quad i=1,\dots,N.
\end{equation}
In this regression example, we have considered a convolutional neural network (CNN) with an architecture named \texttt{CNN-Rn} as indicated in \Cref{Nets}. We have also used the \texttt{DIGITS} dataset containing $10\times 10^3$ synthetic images with $28\times 28$ pixels as well as their angles (in degrees) by which each image is rotated. Every single image is defined as a 3-D numeric array $x_i \in \mathbb{R}^d$ where $d = {28 \times 28 \times 1}$. Moreover, the response $y_i$ (the rotation angle in degrees) is approximately uniformly distributed between $-45^{\circ}$ and $45^{\circ}$. Each training and testing dataset has the same number of images ($N=\hat{N}=5\times 10^3$). Besides zero-one rescaling, we have also applied zero-center normalization to have zero mean. The problem \eqref{eq.problem} with single loss function \eqref{single_loss_reg} is solved for $w \in \mathbb{R}^n$ where $n=16,881$ using \texttt{CNN-Rn} for \texttt{DIGITS}. In this experiment, the accuracy is the number of predictions in percentage within an acceptable error margin (threshold) that we have set to be 10 degrees.
\begin{figure}[t]
    \centering
    \includegraphics[width=3.5cm, height=5.75cm]{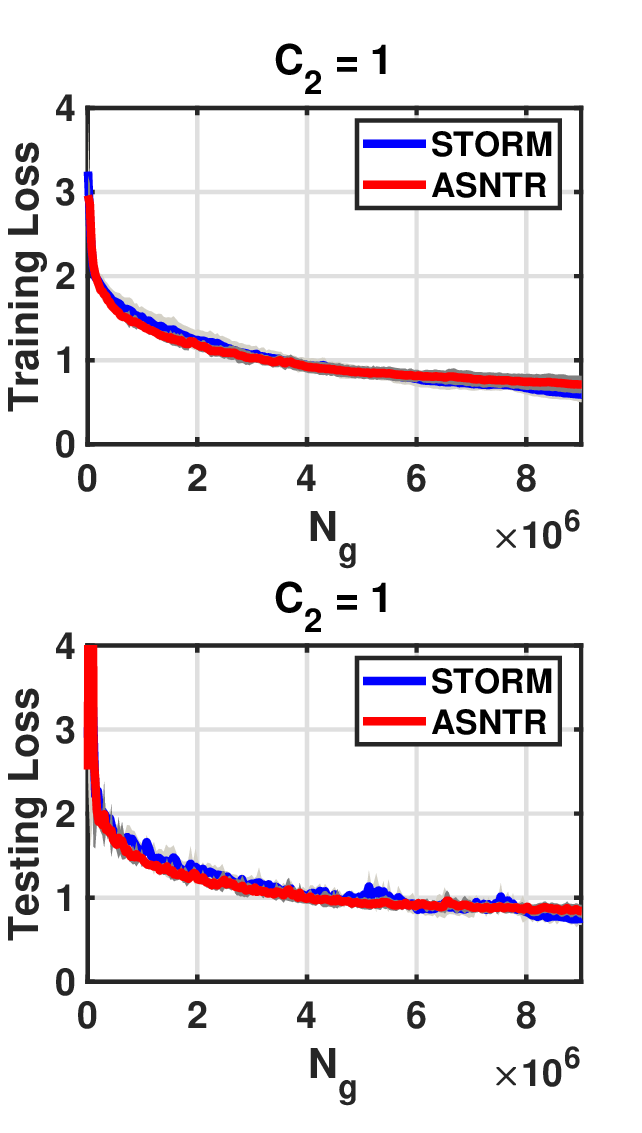}
    \includegraphics[width=3.5cm, height=5.75cm]{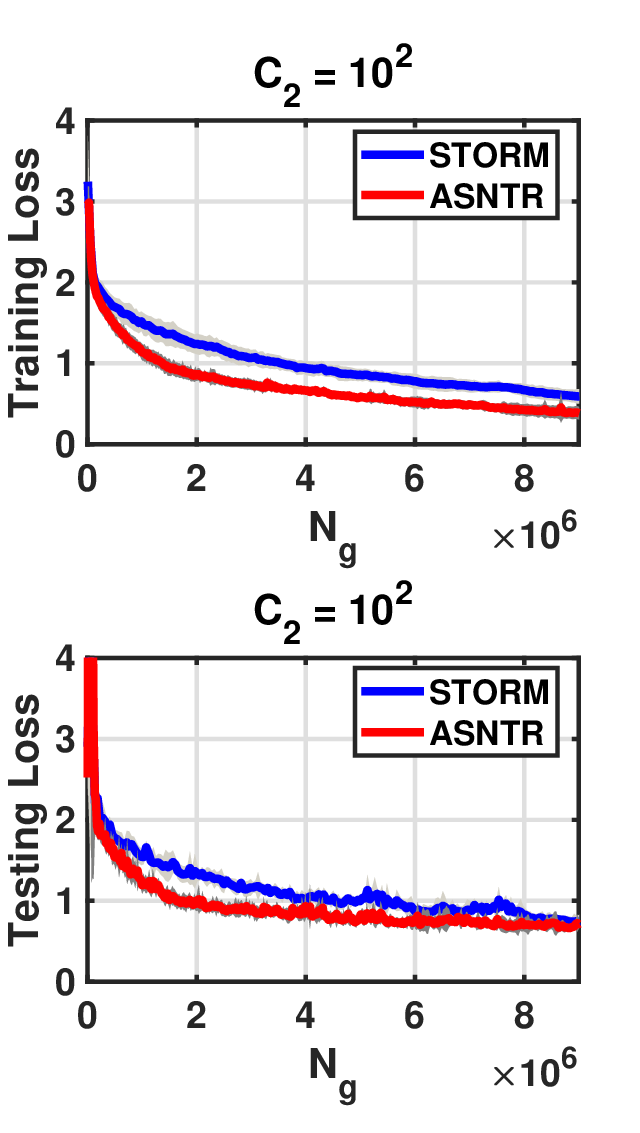}
    \includegraphics[width=3.5cm, height=5.75cm]{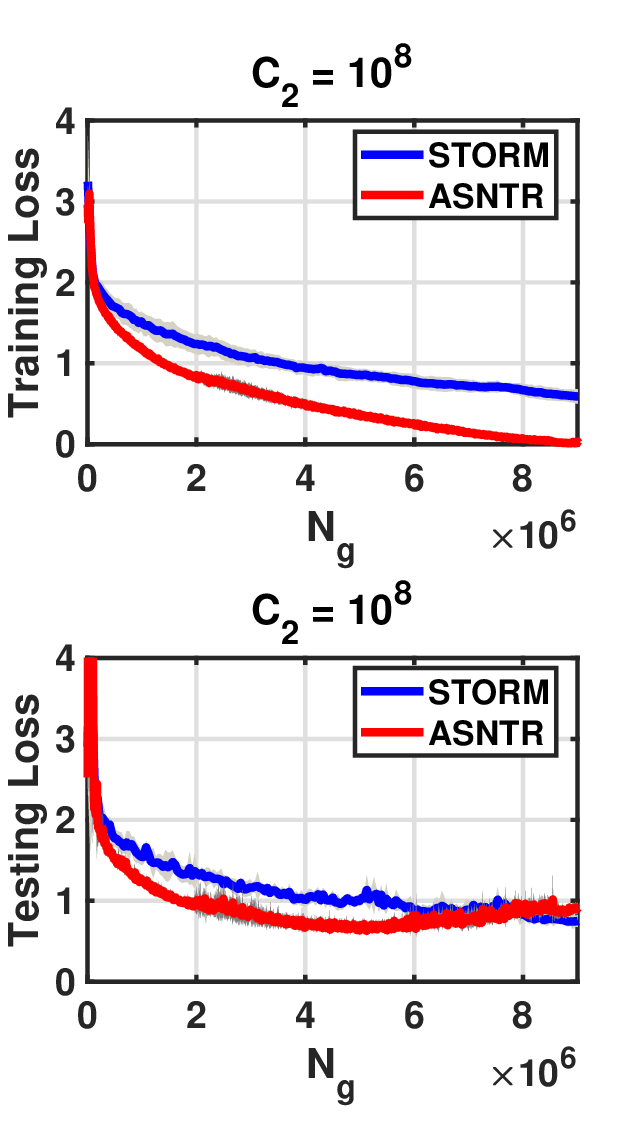}
    \caption{\small The loss variation of STORM and ASNTR on \texttt{CIFAR10}.}
    \label{Fig_Loss_Cifar_magnified}
\end{figure}
\vspace{-45pt}
\begin{figure}[H]
    \centering
    \includegraphics[width=3.5cm, height=5.7cm]{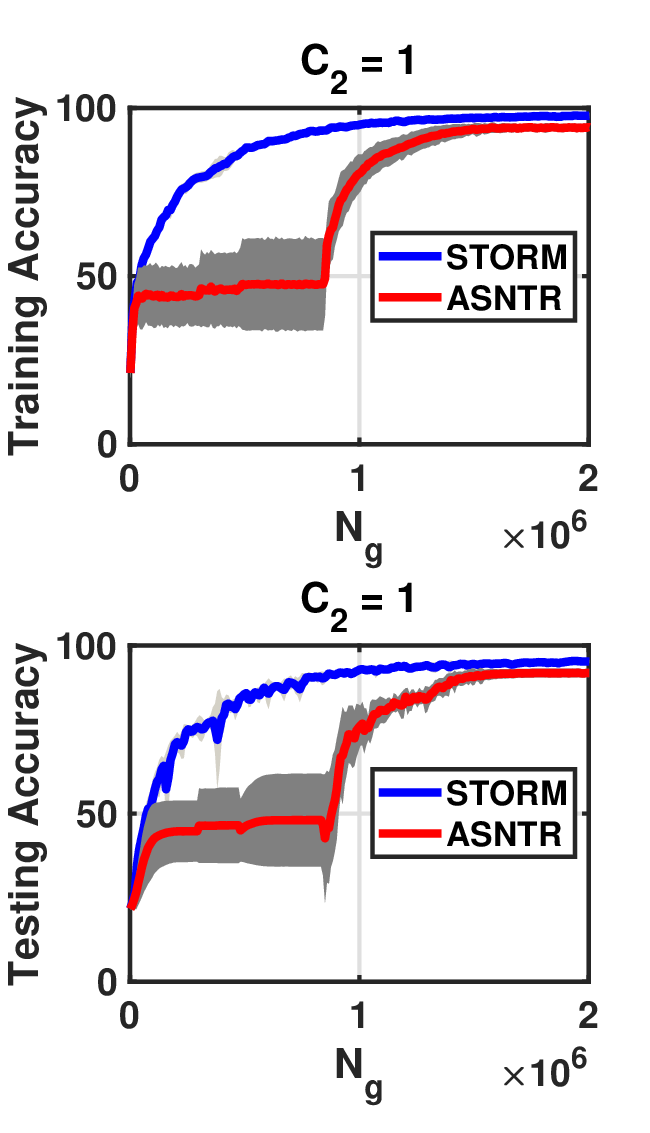}
    \includegraphics[width=3.5cm, height=5.7cm]{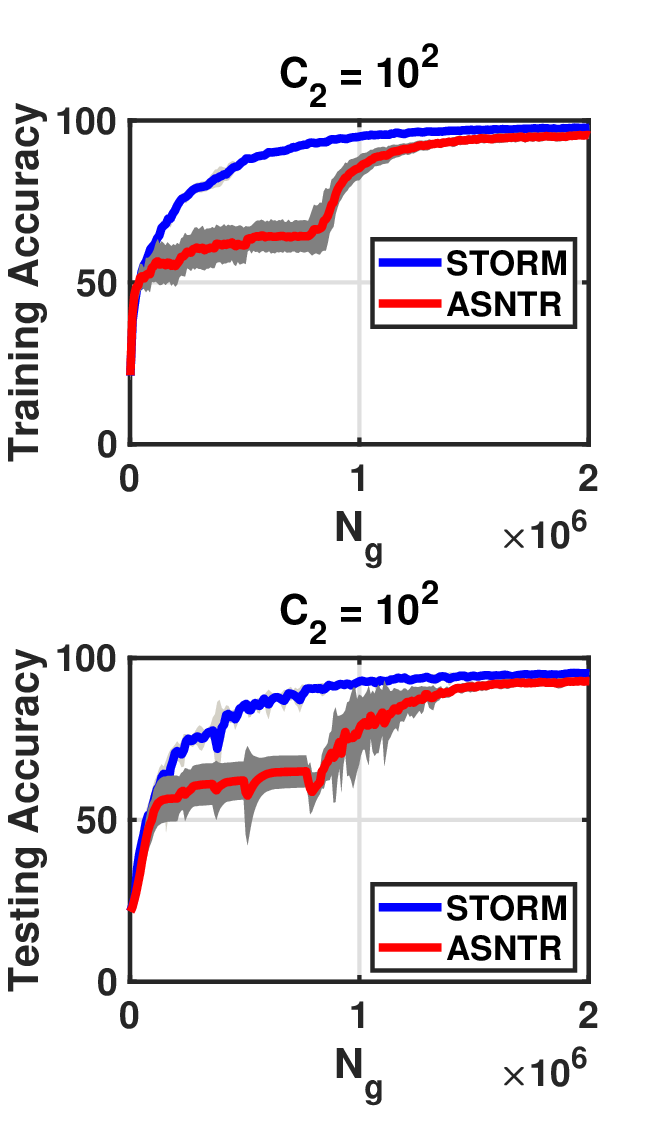}
    \includegraphics[width=3.5cm, height=5.7cm]{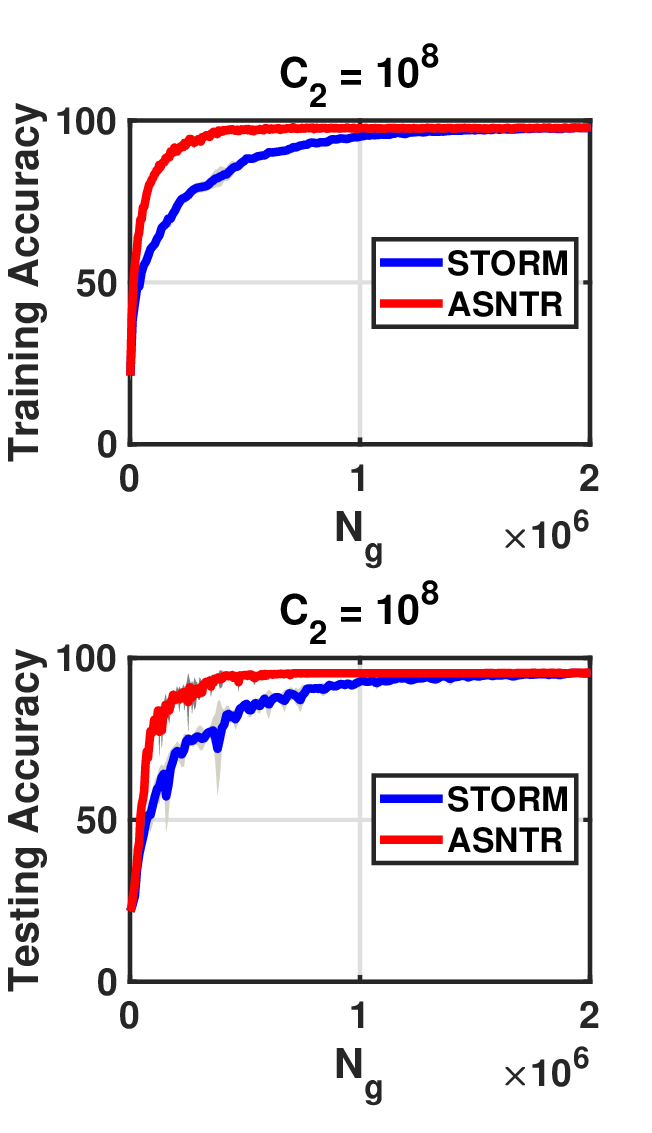}
    \caption{\small The accuracy variations of STORM and ASNTR on \texttt{DIGITS}.}
    \label{Fig_Acc_Digits_magnified}
\end{figure}
\vspace{-45pt}
\begin{figure}[H]
    \centering
    \includegraphics[width=3.5cm, height=5.75cm]{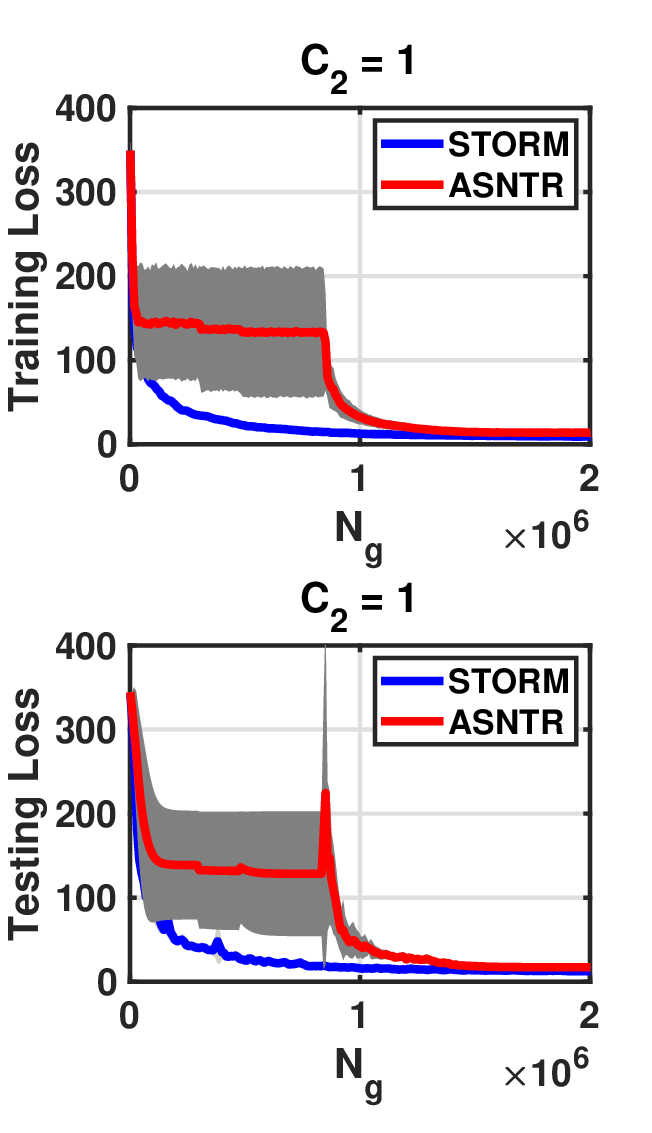}
    \includegraphics[width=3.5cm, height=5.75cm]{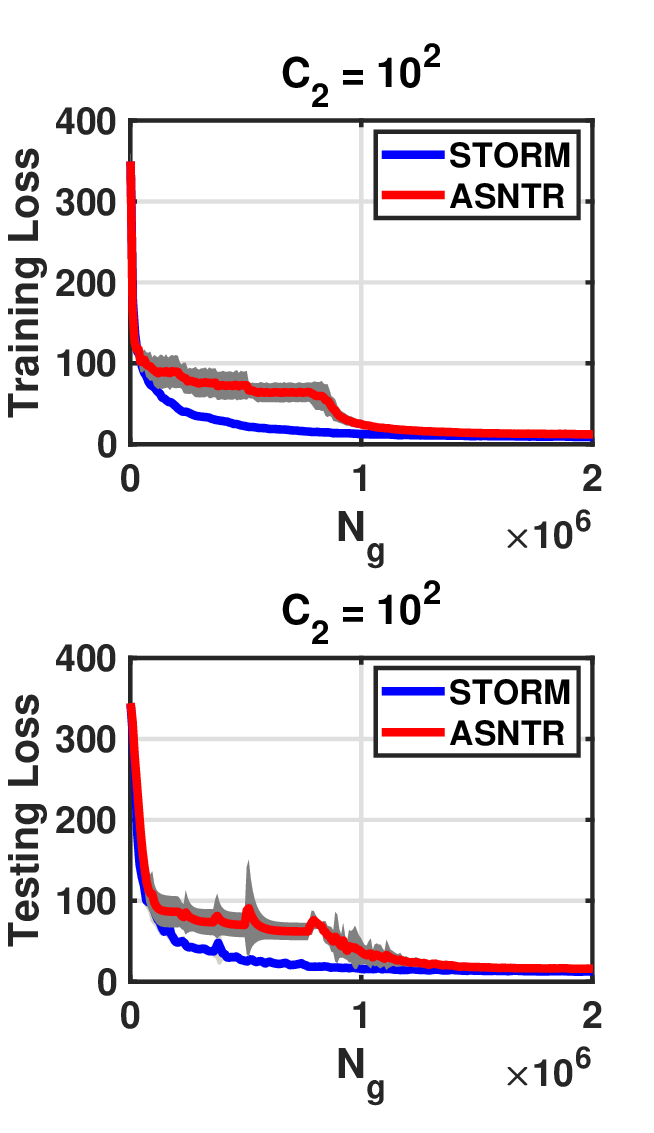}
    \includegraphics[width=3.5cm, height=5.75cm]{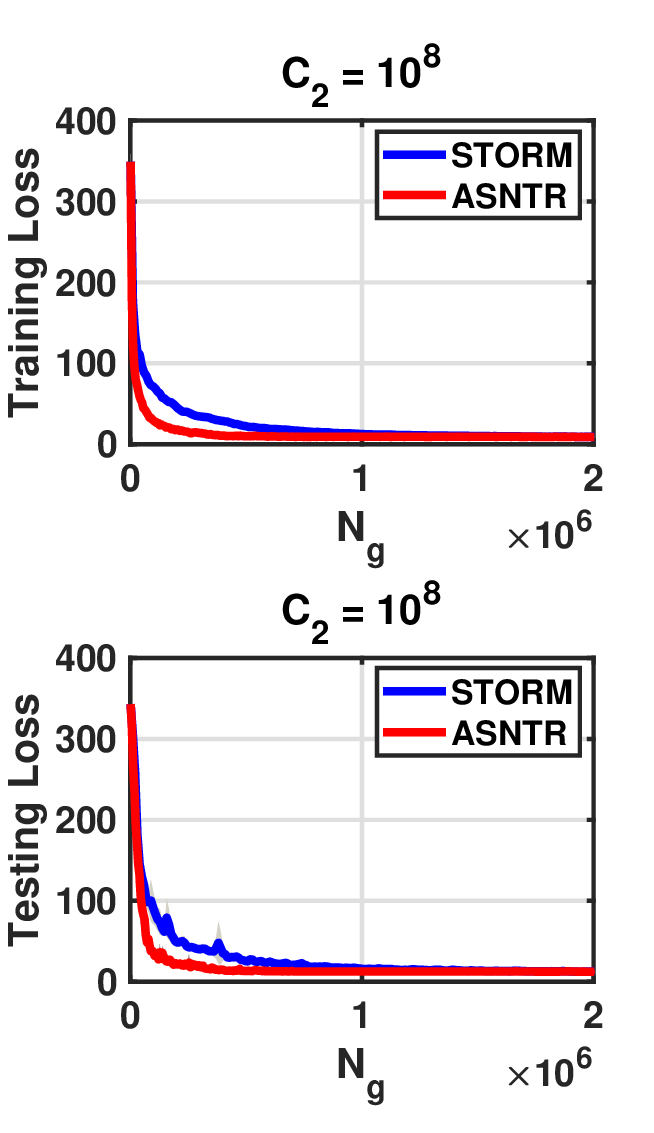}
    \caption{\small The loss variation of STORM and ASNTR on \texttt{DIGITS}.}
    \label{Fig_Loss_Digits_magnified}
\end{figure}

\Cref{Fig_Acc_Digits_magnified,Fig_Loss_Digits_magnified} show training accuracy and loss, and testing accuracy and loss variations of ASNTR for \texttt{DIGITS} dataset with three values of $C_2$ within a fixed budget of gradient evaluations $N_g^{\text{max}}=2\times 10^6$. These figures also illuminate how resilient ASNTR is for the highest value of $C_2$. Despite several challenges in the early stages of the training phase with $C_2=1$ and $C_2=10^2$, ASNTR can overcome them and achieve accuracy levels comparable to those of the STORM algorithm.

\subsection{Additional results}\label{body3d}
We present two additional figures (Figs.~\ref{Fig_Scatter_magnified} and \ref{Fig_bs_magnified}) containing further details regarding our proposed algorithm, ASNTR, with $C_2=1,\, 10^{2},\, 10^{8}$ in $\rho_{\mathcal{D}_k}$ and $C_1=1$ in $\rho_{\mathcal{N}_k}$. More specifically, these results aim to give useful insights concerning the sampling behavior of ASNTR.
Let S1 and S2 indicate the iterations of ASNTR at which steps 7 and 10, respectively, are executed using the increasing sampling rule \eqref{eq.adSampling}. 
When the 
\begin{figure}[H]
\centering
\begin{subfigure}{\textwidth}
\vspace{0.03cm}
\centering
{\includegraphics[width=4.2cm, height=4.5cm]{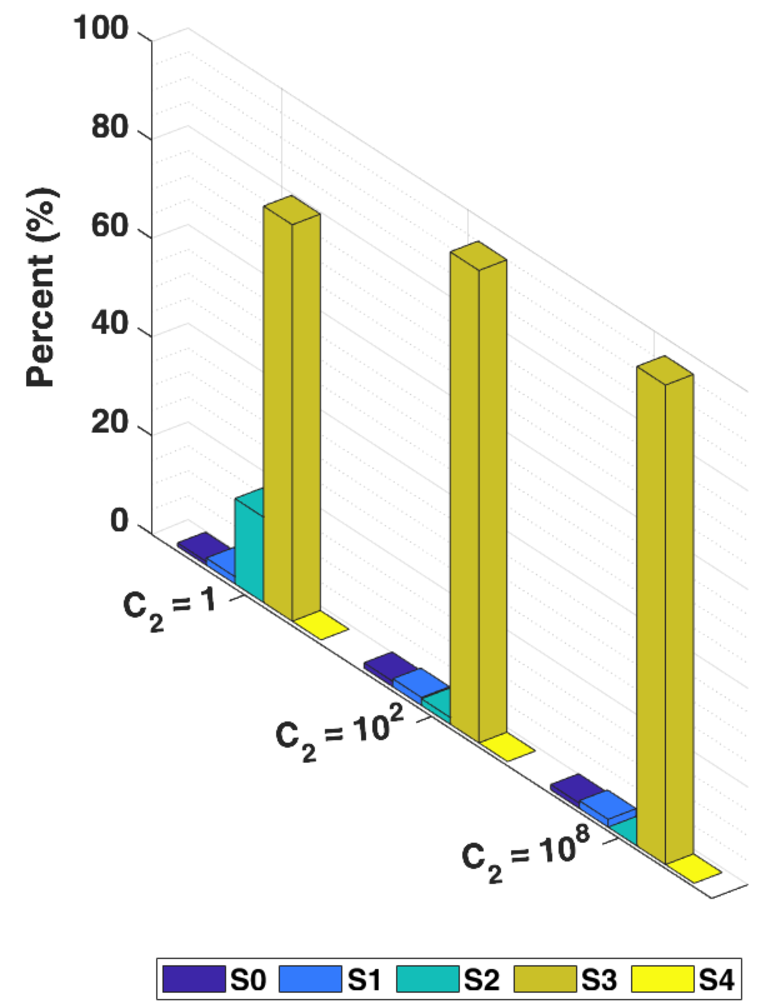}}
{\includegraphics[width=4.2cm, height=4.5cm]{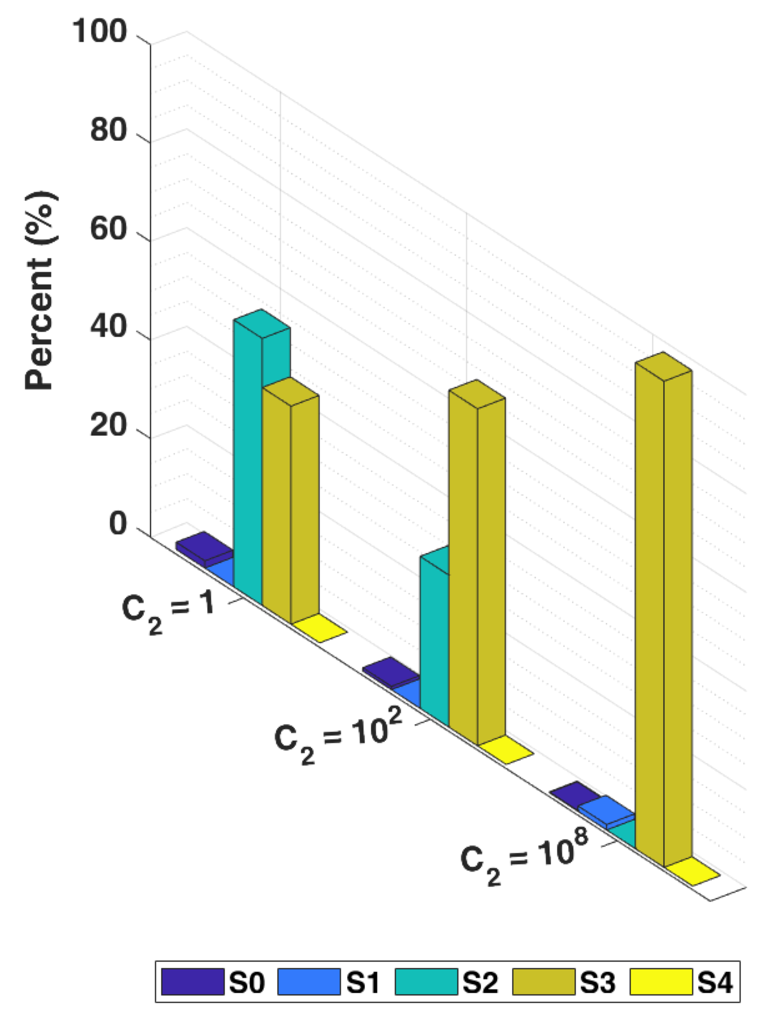}}
{\includegraphics[width=4.2cm, height=4.5cm]{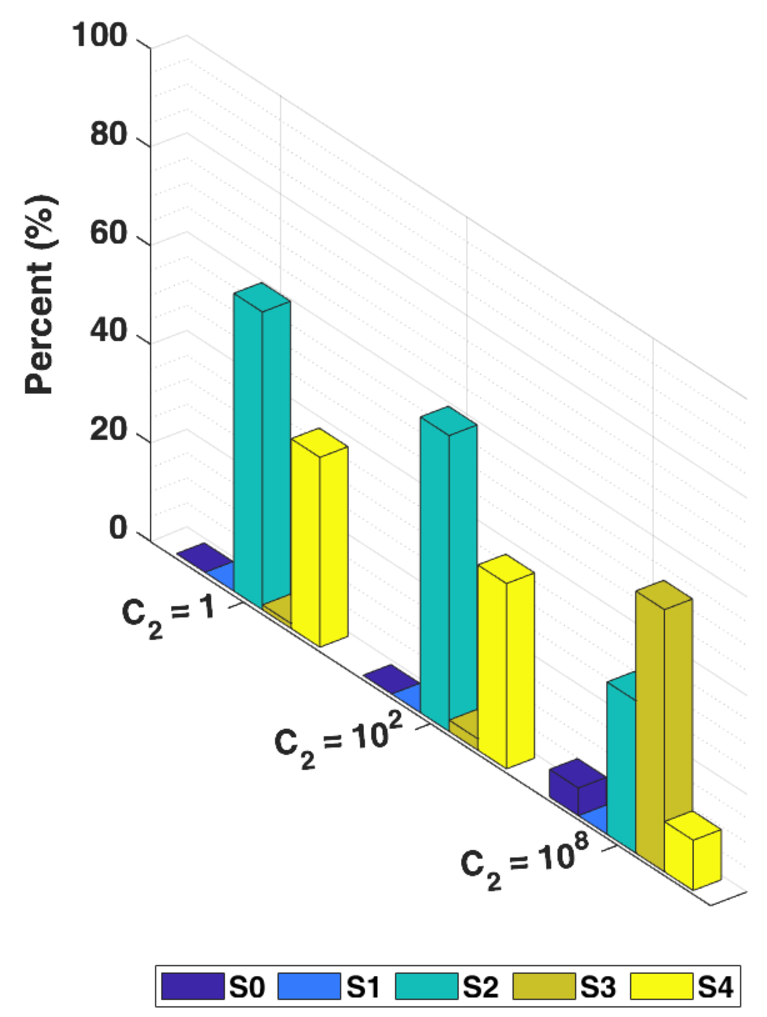}}
\vspace{-0.2cm}
\caption{\small Sampling behaviour of ASNTR on \texttt{MNIST} (left), \texttt{CIFAR10} (middle), \texttt{DIGITS} (right)} {\label{fig.Portion}}
\end{subfigure}
\begin{subfigure}{\textwidth}
\vspace{0.01cm}
\centering
{\includegraphics[width=13cm,height=3.5cm]{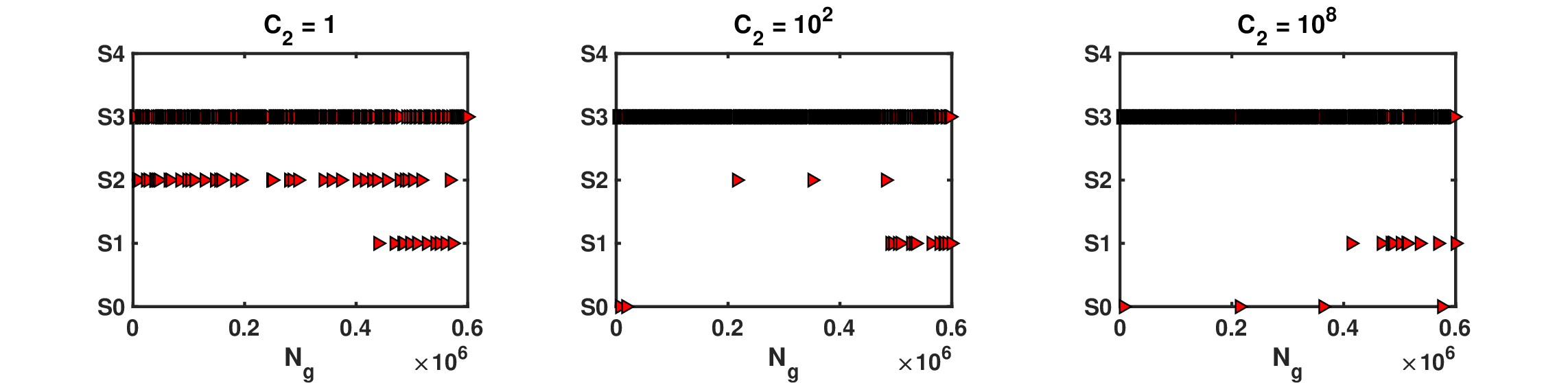}}
\vspace{-0.2cm}
\caption{\small Sampling behaviour with initial $\texttt{rng(42)}$ on \texttt{MNIST}}{\label{fig.ScatterMnist}}
\end{subfigure}
\begin{subfigure}{\textwidth}
\vspace{0.01cm}
\centering
{\includegraphics[width=13cm,height=3.5cm]{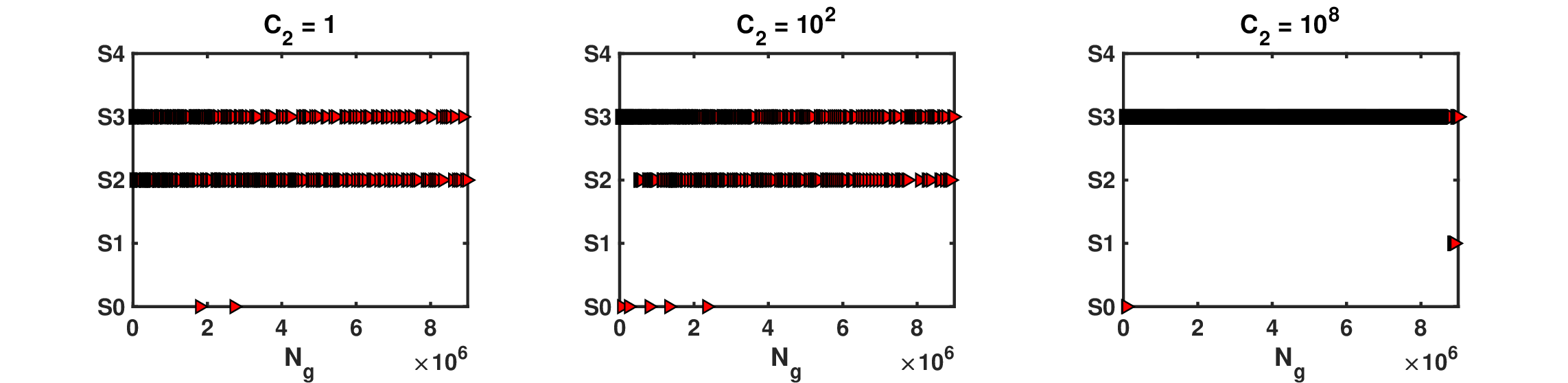}}
\vspace{-0.2cm}
\caption{\small Sampling behaviour with initial $\texttt{rng(42)}$ on \texttt{CIFAR10}}{\label{fig.ScatterCifar}}
\end{subfigure}
\begin{subfigure}{\textwidth}
\vspace{0.01cm}
\centering
{\includegraphics[width=13cm,height=3.5cm]{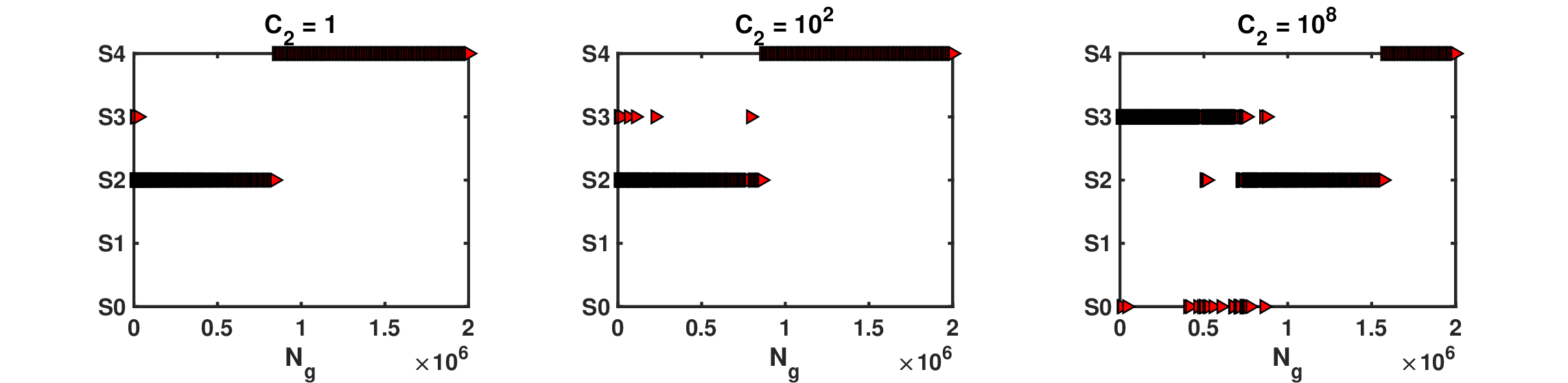}}
\vspace{0.01cm}
\caption{\small Sampling behaviour with initial $\texttt{rng(42)}$ on \texttt{DIGITS}}{\label{fig.ScatterDigits}}
\end{subfigure}
\caption{\small Tracking subsampling in ASNTR.}
\label{Fig_Scatter_magnified}
\end{figure}
\noindent cardinality of the sample set is not changed,
i.e., $N_{k+1}=N_k$, let S3 and S0 show the iterations at which new
samples through step 15 and current samples through 
step 13 are selected. We also define variable S4 representing the iteration of ASNTR at which all available samples ($N_k=N$) are needed for computing the required quantities. 
\begin{figure}[t]
\centering
\begin{subfigure}{\textwidth}
\vspace{0.1cm}
\centering
{\includegraphics[width=13cm,height=3.5cm]{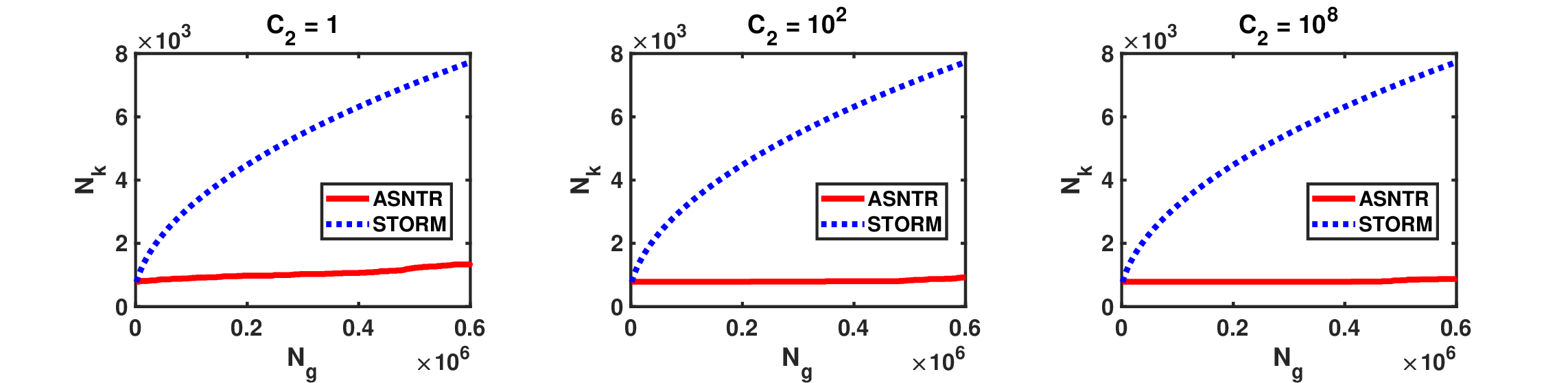}}
\vspace{0.1cm}
\caption{\small  MNIST ($N=6\times 10^4$)}
\end{subfigure}
\begin{subfigure}{\textwidth}
\vspace{0.1cm}
\centering
{\includegraphics[width=13cm,height=3.5cm]{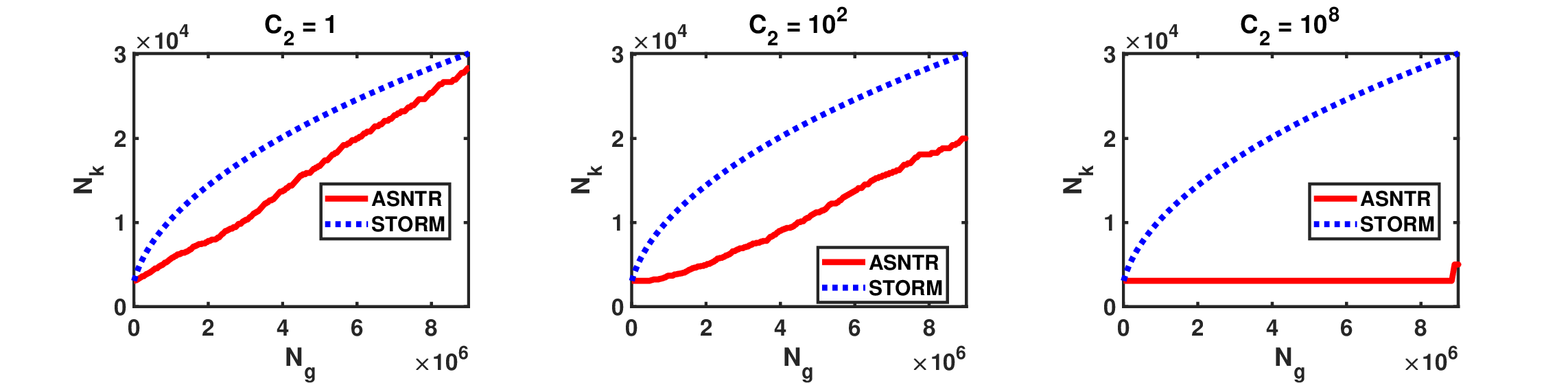}}
\vspace{0.1cm}
\caption{\small  \texttt{CIFAR10} ($N=5\times 10^4$)}
\end{subfigure}
\begin{subfigure}{\textwidth}
\vspace{0.1cm}
\centering
{\includegraphics[width=13cm,height=3.5cm]{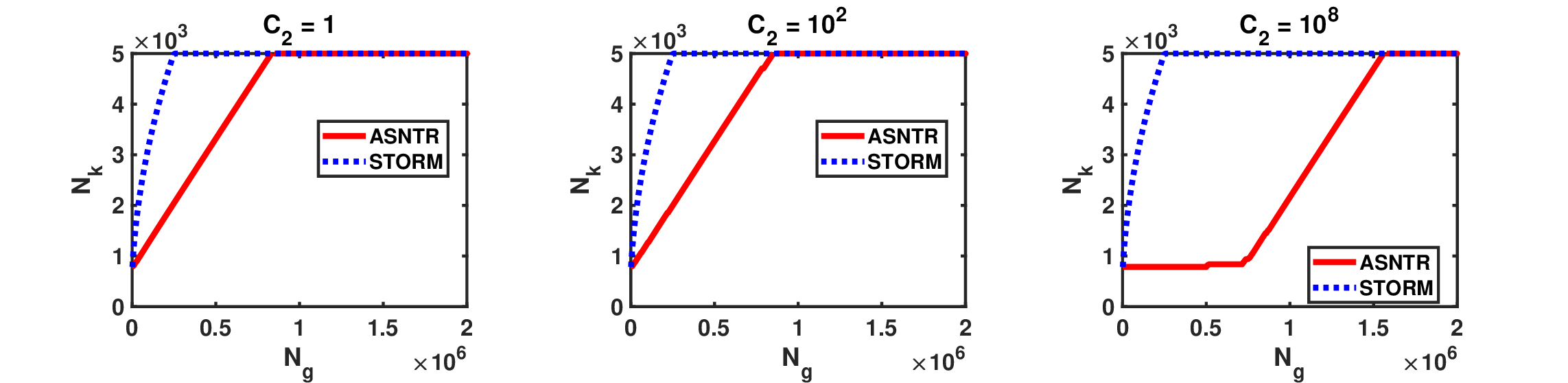}}
\vspace{0.1cm}
\caption{\small \texttt{DIGITS} ($N=5\times 10^3$)}
\end{subfigure}
\caption{\small Batch size progress with initial $\texttt{rng(42)}$.}
\label{Fig_bs_magnified}
\end{figure}

 \Cref{fig.Portion} shows the (average) contributions of the aforementioned sampling types in ASNTR running with five different initial random generators for \texttt{MNIST}, \texttt{CIFAR10}, and \texttt{DIGITS} with respectively predetermined $N_g^{\text{max}}=0.6\times 10^{6},\, 9\times 10^{6}$ and $2\times 10^{6}$; however, considering only a specific initial seed, e.g. \texttt{rng(42)}, Figs.~\ref{fig.ScatterMnist}, \ref{fig.ScatterCifar} and \ref{fig.ScatterDigits} indicate when/where each of these sampling types is utilized in ASNTR. Obviously, the contribution rate of $S3$ is influenced by $S2$, where ASNTR has to increase the batch size if $\rho_{\D_k}<\nu$. In fact, the larger $C_2$ in $\rho_{D_k}$ results in the higher portion of $S3$ and the smaller portion of $S2$. Moreover, using a large value of $C_2$, there is no need to increase the current batch size unless the current iterate approaches a stationary point of the current approximate function. This, in turn, leads to increasing the portion of $S1$, which usually happens at the end of the training stage. In addition, we have also found that the value of $C_2$ in $\tilde{t}_k$ plays an important role in the robustness of our proposed algorithm as observed in Figs.~\ref{Fig_Acc_Mnist_magnified} to \ref{Fig_Loss_Digits_magnified}; in other words, the higher the  $C_2$, the more robust ASNTR is. These mentioned observations, more specifically, can be followed for every single dataset as below:
 \begin{itemize}
    \item \texttt{MNIST}: according to Fig.~\ref{fig.Portion}, the portion of the sampling type $S3$ is higher than the others. This means many new batches without an increase in the batch size are applied in ASNTR during training; i.e., the proposed algorithm can train a network with fewer samples, and thus fewer gradient evaluations. Nevertheless, ASNTR with different values of $C_2$ in $\rho_{\mathcal{D}_k}$ increases the size of the mini-batches in some of its iterations; see the portions of $S1$ and $S2$ in Fig.~\ref{fig.Portion} or their scatters in Fig.~\ref{fig.ScatterMnist}. We should note that the sampling type $S1$ occurs almost at the end of the training phase where the algorithm tends to be close to a stationary point of the current approximate function; Fig.~\ref{fig.ScatterMnist} shows this fact.
    \item \texttt{CIFAR10}: according to Fig.~\ref{fig.Portion}, the portion of the sampling type $S3$ is still high. Unlike \texttt{MNIST}, the sampling type $S1$ rarely occurs during the training phase. On the other hand, a high portion of the increase of the sample size through $S2$ may compensate for the lack of sufficiently accurate functions and gradients required in ASNTR. These points are also illustrated in Fig.~\ref{fig.ScatterCifar}, which shows how ASNTR successfully trained the \texttt{ResNet-20} model without frequently enlarging the sample sizes. For both the \texttt{MNIST} and \texttt{CIFAR10} problems, $S3$ as the predominant type corresponds to $C_2=10^{8}$.
    \item \texttt{DIGITS}: according to Fig.~\ref{fig.Portion}, we observe that the main sampling types are $S2$ and $S4$. As the portion of $S2$ increases, the portion of $S3$ decreases and the highest portion of $S3$ corresponds to the largest value of $C_2$. This pattern is similar to what is seen in the \texttt{MNIST} and \texttt{CIFAR10} datasets. However, in the case of \texttt{DIGITS}, the portion of $S4$ is higher. This higher portion of $S4$ in \texttt{DIGITS} may be attributed to the smaller number of samples in this dataset ($N=5000$), which causes ASNTR to quickly encompass all the samples after a few iterations. Notably, the sampling type $S4$ occurs towards the end of the training phase, as shown in Fig.~\ref{fig.ScatterDigits}.
\end{itemize} 

\Cref{Fig_bs_magnified} compares the progression of batch size growth in both ASNTR and STORM. In contrast to the STORM algorithm, ASNTR increases the batch size only when necessary, which can reduce the computational costs of gradient evaluations. This is considered a significant advantage of ASNTR over STORM. However, according to this figure, the proposed algorithm needs fewer samples than STORM during the initial phase of the training task, but it requires more samples toward the end. Nevertheless, we should notice that the increase in batch size that happened at the end of the training phase is determined either by $S1$ or by $S4$ (see Figs.~\ref{fig.ScatterMnist} and \ref{fig.ScatterDigits}). In our experiments, we have observed that ASNTR does not use very large $N_g^{\text{max}}$, as it typically achieves the required training accuracy. 

\subsection{Comparison with ADAM}\label{body3e}
We have compared the proposed stochastic second-order method (ASNTR) with a popular efficient first-order optimizer, i.e., Adaptive Moment Estimation (ADAM) \cite{kingma2017adam} used in DL. We have implemented ADAM using MATLAB built-in function \texttt{adamupdate} in customized training loops. It's widely recognized that this optimizer is highly sensitive to the value of its hyper-parameters including the learning rate, $\alpha_k$, the gradient decay factor, $\beta_1$, the squared gradient decay factor, $\beta_2$, a small constant to prevent division by zero, $\epsilon$, and batch size. Users should be aware of the hyper-parameter choices and invest time in tuning them using techniques such as grid search. In our experiments, we set $\beta_1 = 0.9$, $\beta_2 = 0.999$, and $\epsilon=10^{-8}$, respectively, and focus our tuning effort on learning rate and batch size. The specified choices for tuning the learning rate were $\alpha_k = \alpha$ with $\alpha$ taking values from the set $\{10^{-4}, 10^{-3}, 10^{-2}\}$, and the corresponding values for batch size were $N_k = bs$ where ${bs}$ varied within $\{128, 256, 784\}$ for both \texttt{MNIST} and \texttt{DIGITS}, and within $\{128, 256, 3072\}$ for \texttt{CIFAR10}.

The best hyper-parameters were those that yielded the highest testing accuracy within a fixed budget of gradient evaluations. In all experiments, the optimal performance with Adam was consistently achieved using $\alpha=10^{-3}$ and $bs=128$. These settings were employed with ADAM in Figs.~\ref{Fig_Acc_Loss_MNIST},\ref{Fig_Acc_Loss_CIFAR} and \ref{Fig_Acc_Loss_DIGITS} (first rows) for comparison purposes against ASNTR with $C_2=10^8$ demonstrated in Figs.~\ref{Fig_Acc_Mnist_magnified} to \ref{Fig_Loss_Digits_magnified}. As these figures show, the tuned ADAM could produce the highest accuracy and lowest loss with fewer gradient evaluations in comparison with ASNTR. The common observation of rapid initial improvement achieved by ADAM, followed by a drastic slowdown, is well understood in practice for some first-order methods (see, e.g., \cite{bottou2018optimization}). First-order methods such as ADAM are computationally less expensive per iteration compared to second-order methods such as ASNTR, as they only involve the computation of one gradient compared to two gradients. Moreover, as already mentioned the initial sample size for ASNTR was set as $N_0 = d+1$ where $d=784$ for both \texttt{MNIST} and \texttt{DIGITS} and $d=3072$ for \texttt{CIFAR10}. By initially employing such large batch sizes against an obtained optimal batch size for ADAM, i.e., $N_k=128$, there are only a small number of updated iterates ($w_k$) during training with both second-order methods within the fixed budget of gradient evaluations. Therefore, allowing ASNTR to train networks within a larger number of gradient evaluations helps it to eventually achieve a higher level of accuracy while this cannot help ADAM. Note that we have performed this analysis between the tuned ADAM ($N_k=128$, and $\alpha_k=10^{-3}$) and ASNTR, where tuning the hyper-parameters of ADAM incurs significant time costs. When ADAM is used with suboptimal hyper-parameters, its sensitivity becomes evident, as illustrated in Figs.~\ref{Fig_Acc_Loss_MNIST},\ref{Fig_Acc_Loss_CIFAR} and \ref{Fig_Acc_Loss_DIGITS} (second rows) where batch size $N_k=d+1$ and different learning rates are considered. As these figures show, for a challenging problem such as \texttt{CIFAR10} classification, ADAM may not achieve a lower loss value than ASNTR, even when employing an optimal learning rate ($\alpha=10^{-2}$). Neither for the other challenging problem \texttt{DIGITS} ADAM could produce higher testing Accuracy than ASNTR. All the results shown in the three figures were obtained using the same seed for the random number generator (\texttt{rng(42)}). Moreover, due to awkward oscillations in Loss and Accuracy obtained by ADAM in \texttt{CIFAR10} and \texttt{DIGITS} classification problems, we have imposed a filter using \texttt{movmean} MATLAB built-in function (moving mean with a window of length $c$) in Figs.~\ref{Fig_Acc_Loss_CIFAR} and \ref{Fig_Acc_Loss_DIGITS}; in our experiments $c=30$.

\begin{figure}[H]
    \centering
    \includegraphics[width=5.8cm, height=3.9cm]{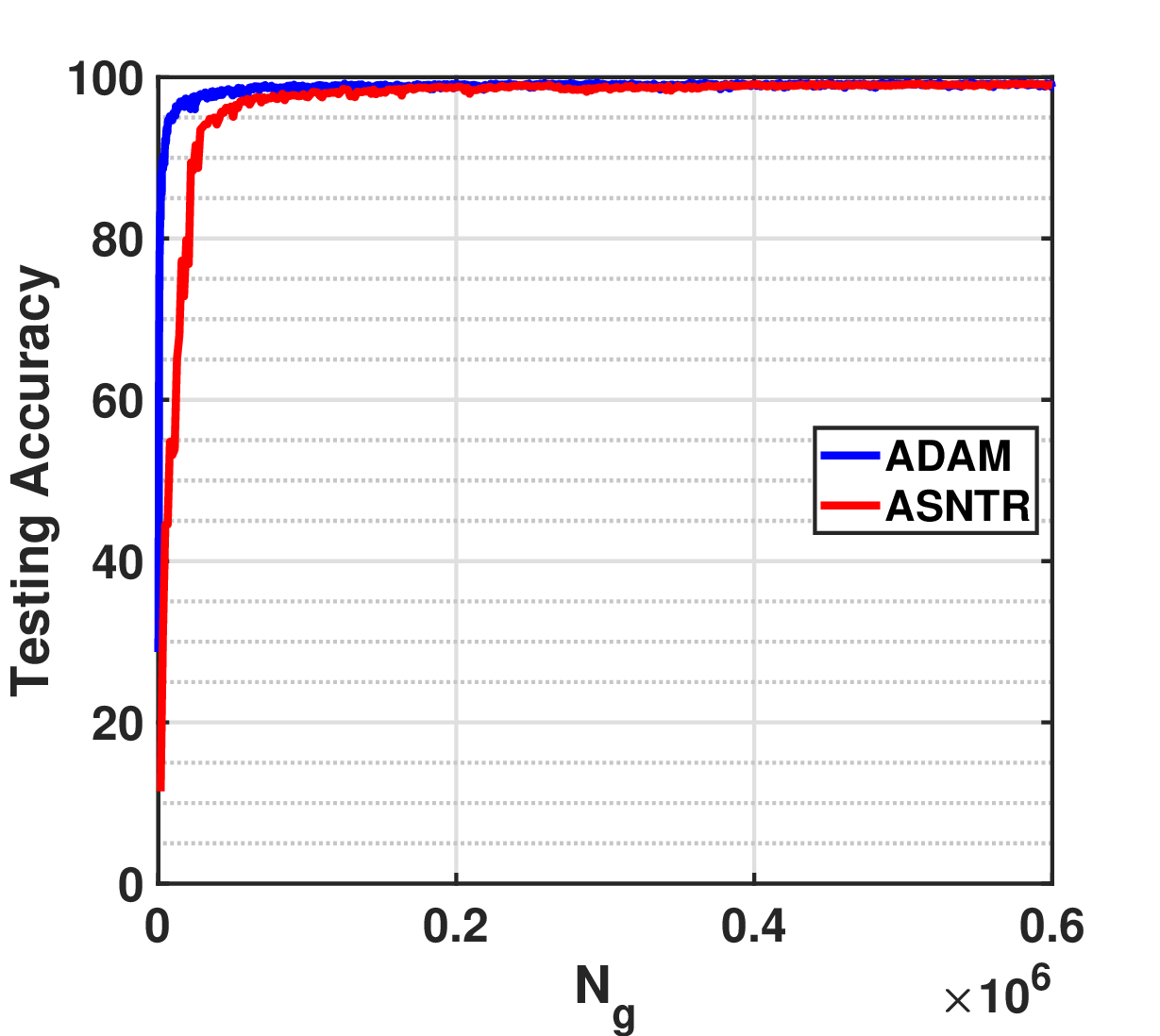}
    \includegraphics[width=5.8cm, height=3.9cm]{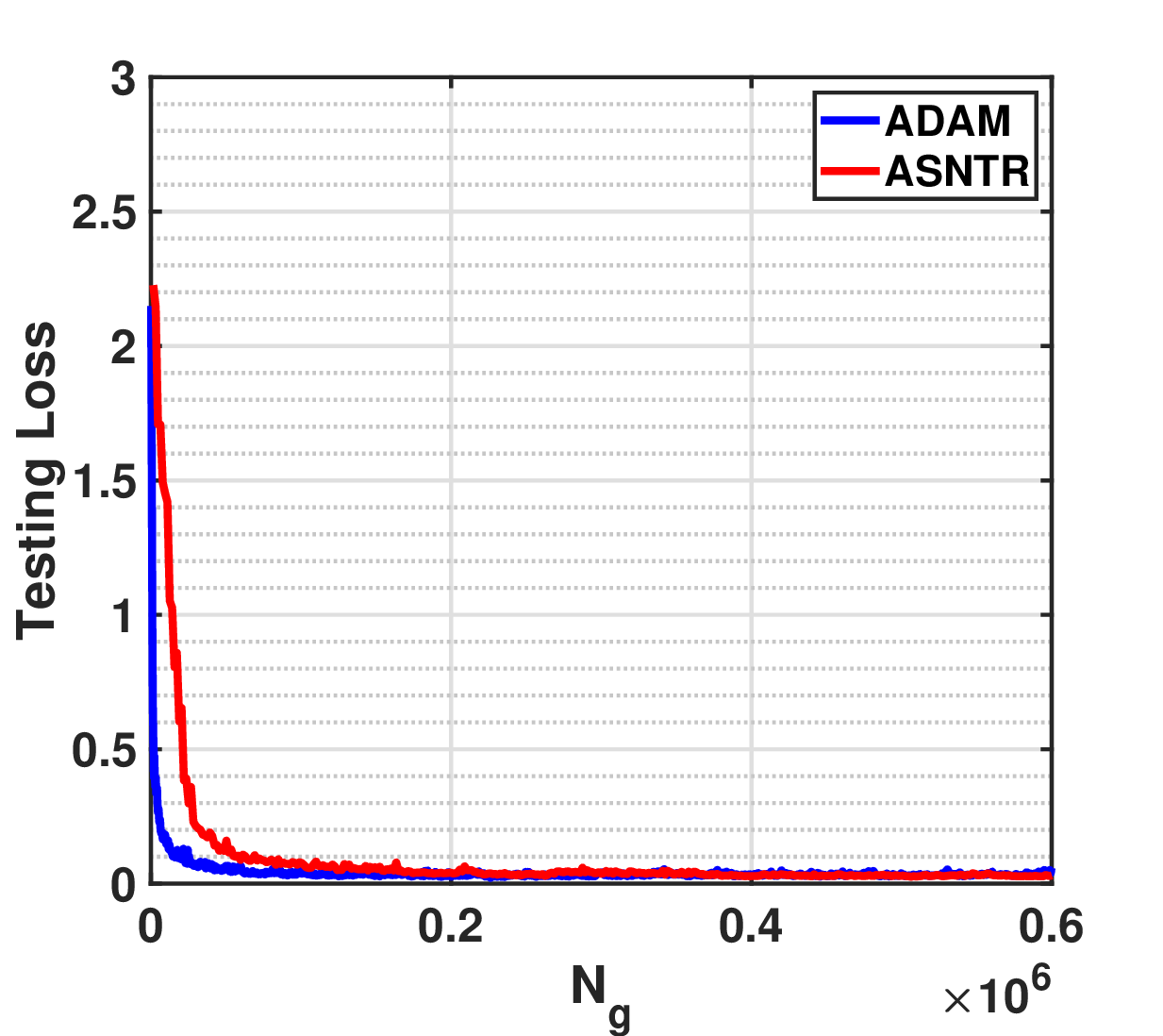}
    \includegraphics[width=5.8cm, height=3.9cm]{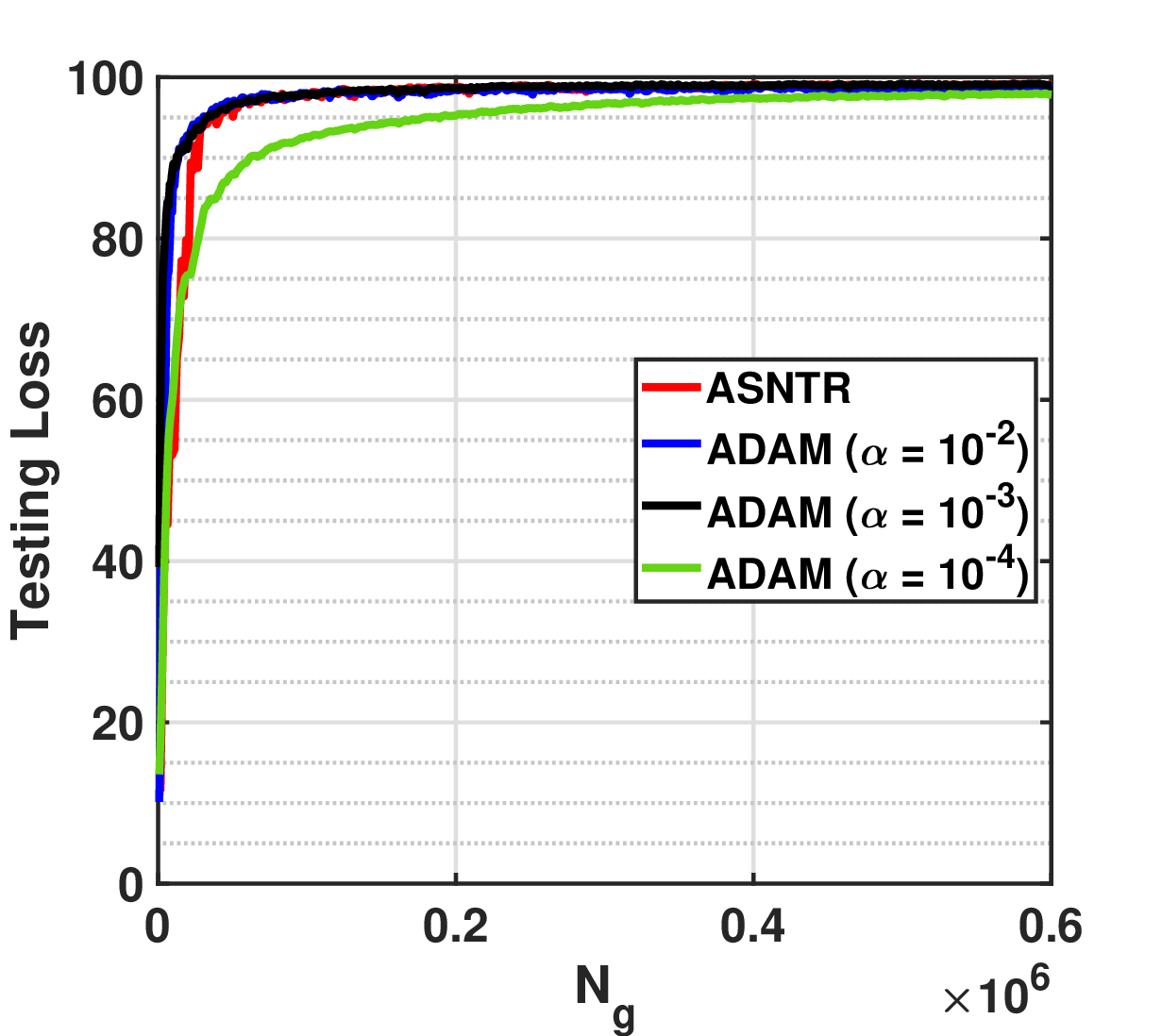}
    \includegraphics[width=5.8cm, height=3.9cm]{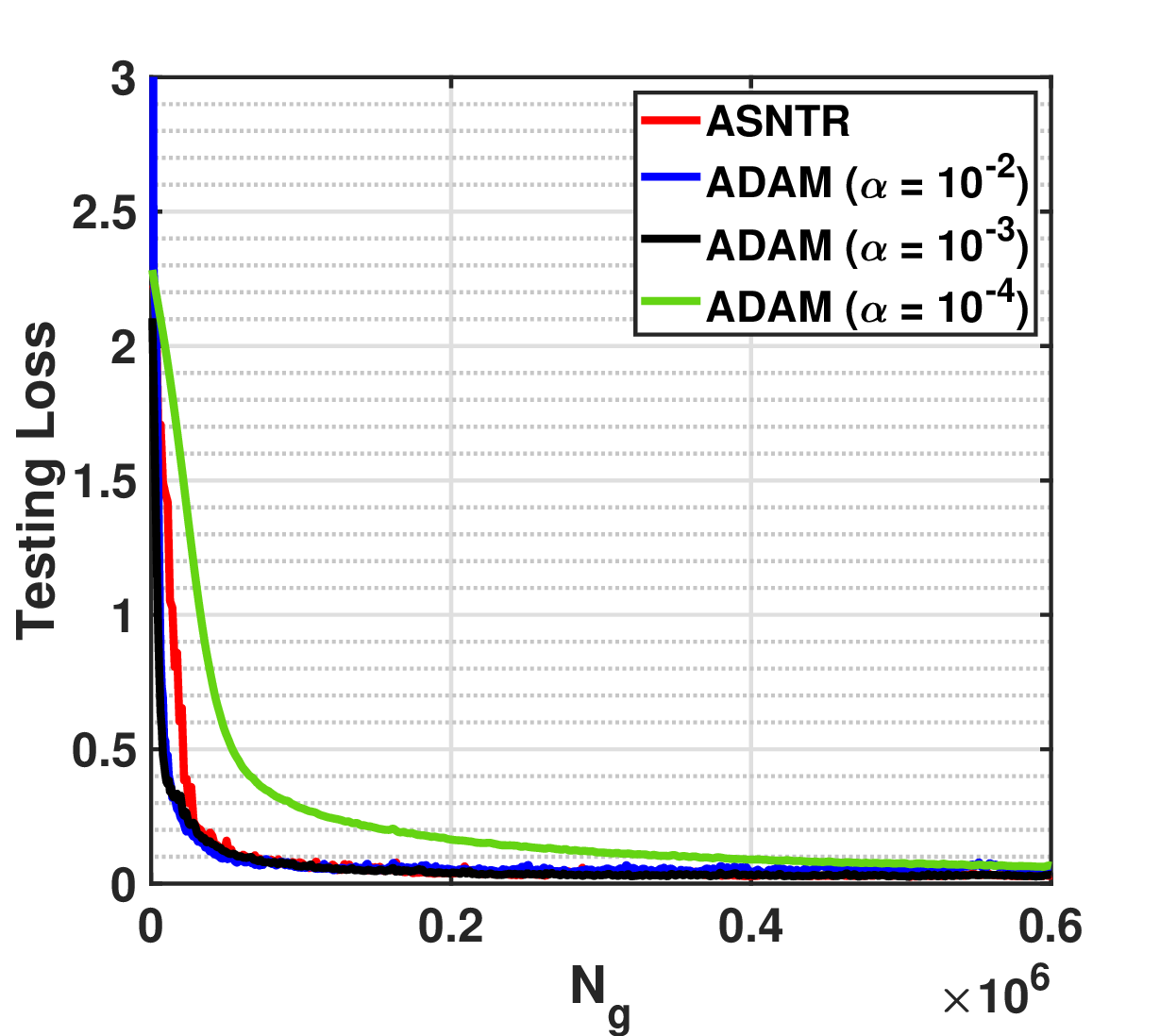}
    \caption{\small Comparison of ASNTR vs. tuned ADAM (first row), and vs. untuned ADAM (second row) on \texttt{MNIST} for training \texttt{LeNet-like} with \texttt{rng(42)}.}
    \label{Fig_Acc_Loss_MNIST}
\end{figure}

\begin{figure}[t]
    \centering
    \includegraphics[width=5.8cm, height=3.9cm]{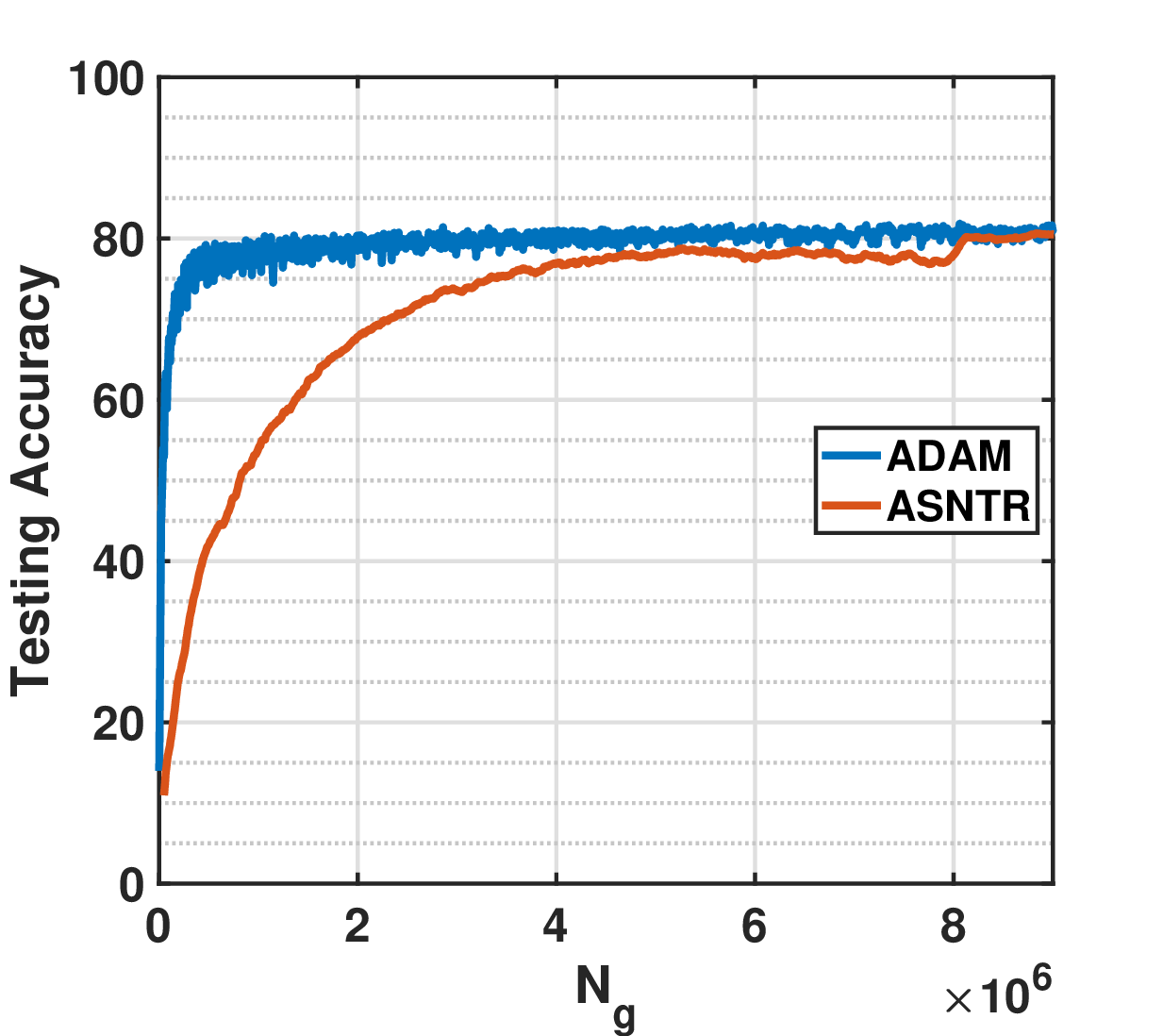}
    \includegraphics[width=5.8cm, height=3.9cm]{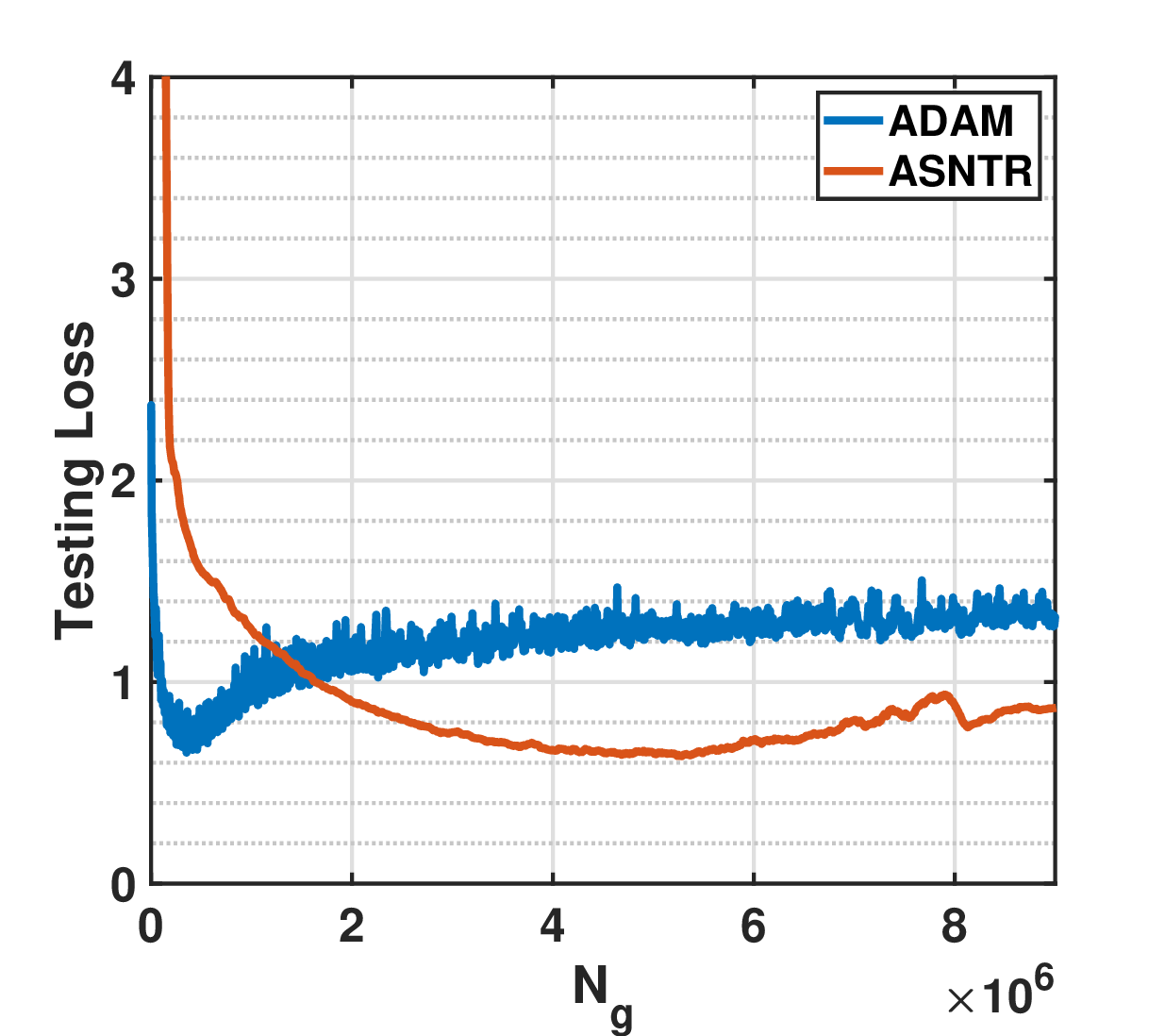}
    \includegraphics[width=5.8cm, height=3.9cm]{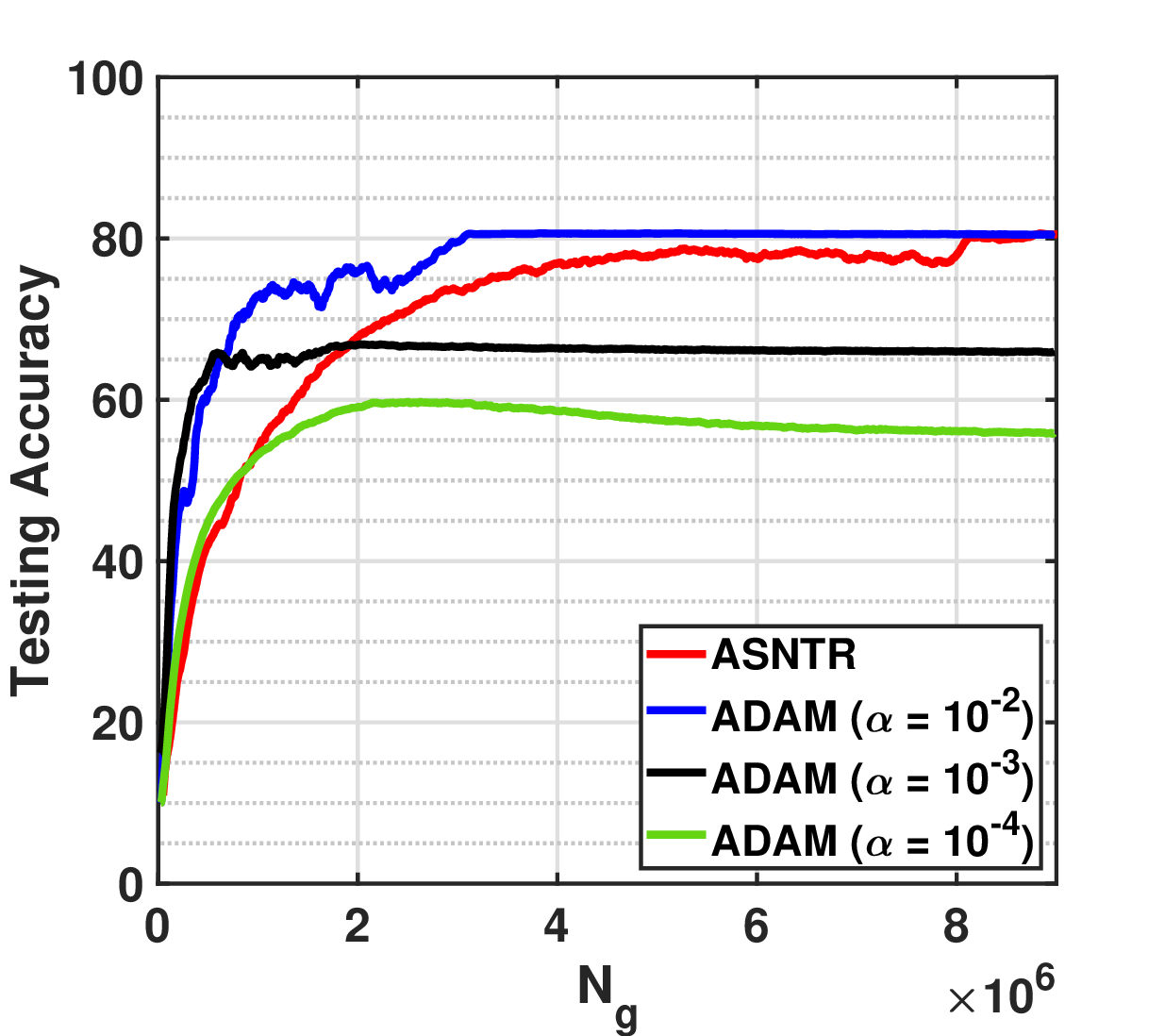}
    \includegraphics[width=5.8cm, height=3.9cm]{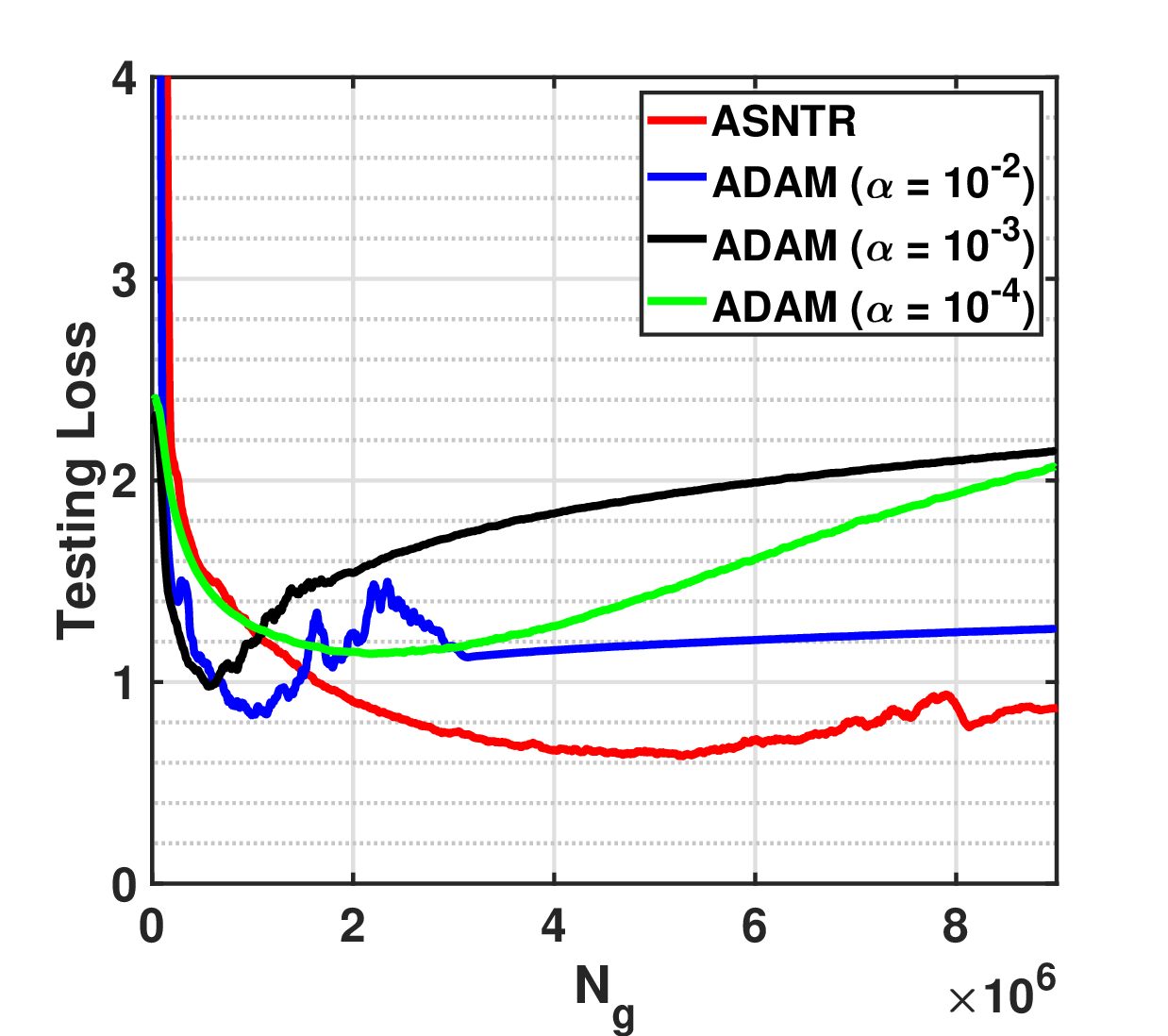}
    \caption{\small Comparison of ASNTR vs. tuned ADAM (first row), and vs. untuned ADAM (second row) on \texttt{CIFAR10} for training \texttt{ResNet-20} with \texttt{rng(42)}.}
    \label{Fig_Acc_Loss_CIFAR}
\end{figure}
\section{Conclusion}\label{Concl}               
In this work, we have presented ASNTR, a second-order non-monotone trust-region method that employs an adaptive subsampling strategy. We have incorporated additional sampling into the TR framework to control the noise and overcome issues in the convergence analysis coming from biased estimators. Depending on the estimated progress of the algorithm, this can yield different scenarios ranging from mini-batch to full sample functions. We provide convergence analysis for all possible scenarios and show that the proposed method achieves almost sure convergence under standard assumptions for the TR framework. The experiments in deep neural network training for both image classification and regression show the efficiency of the proposed method. In comparison to the state-of-the-art second-order method STORM, ASNTR achieves higher testing accuracy with a fixed budget of gradient evaluations. However, our experiments show that the popular first-order method, tuned ADAM using its optimal hyper-parameters, can produce higher accuracy than ASNTR with fixed and 
\begin{figure}[H]
    \centering
    \includegraphics[width=5.8cm, height=3.9cm]{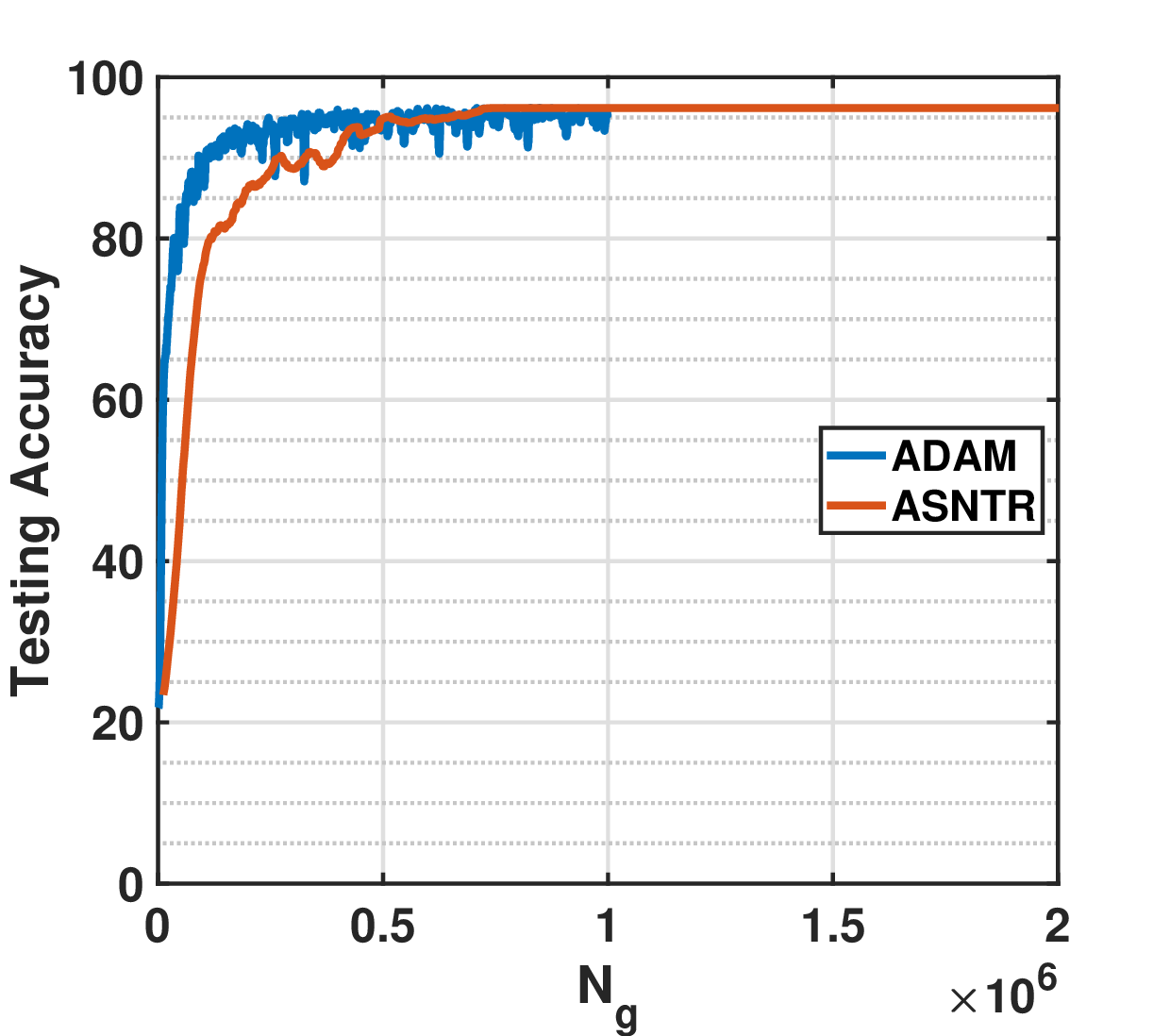}
    \includegraphics[width=5.8cm, height=3.9cm]{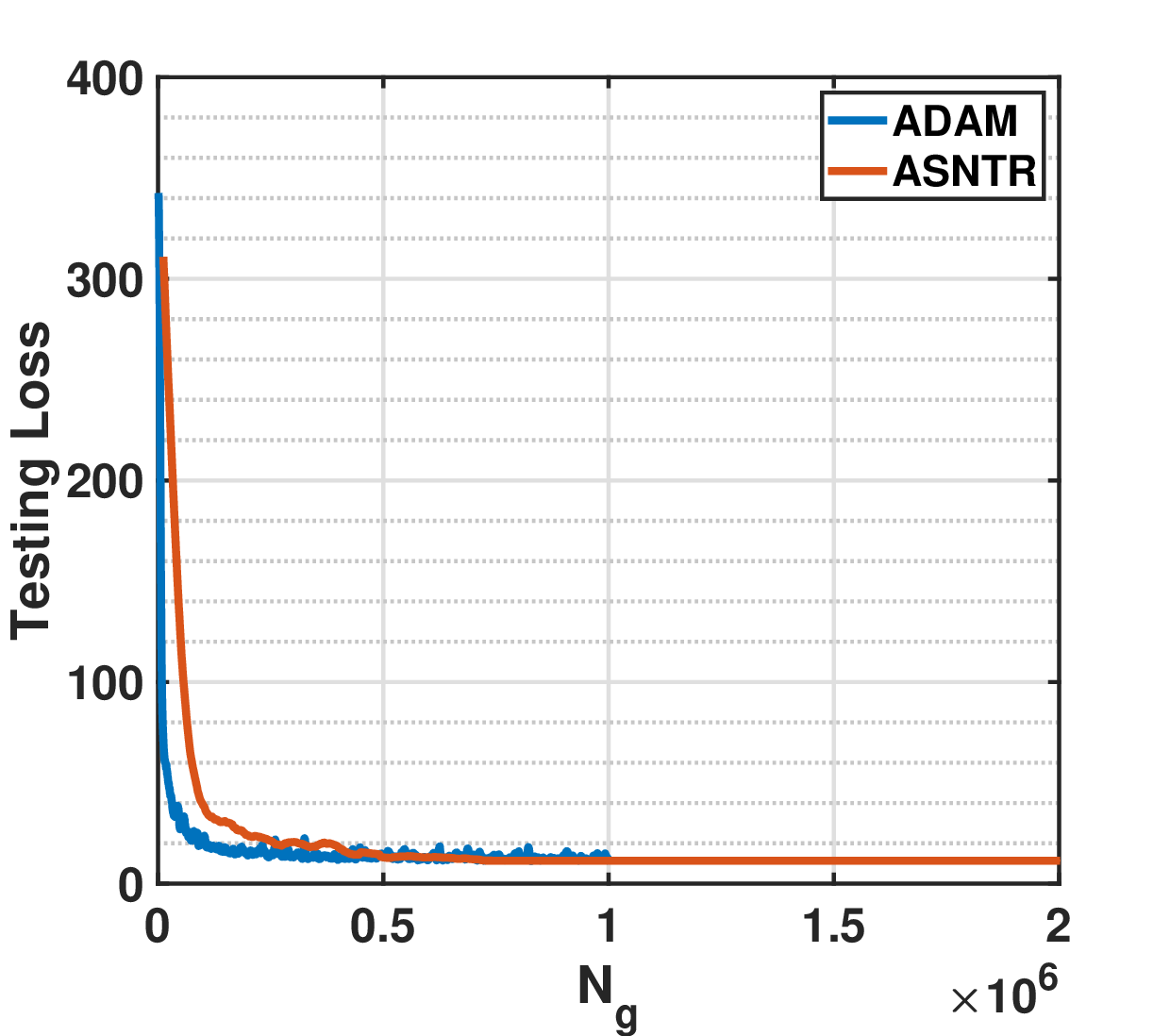}
    \includegraphics[width=5.8cm, height=3.9cm]{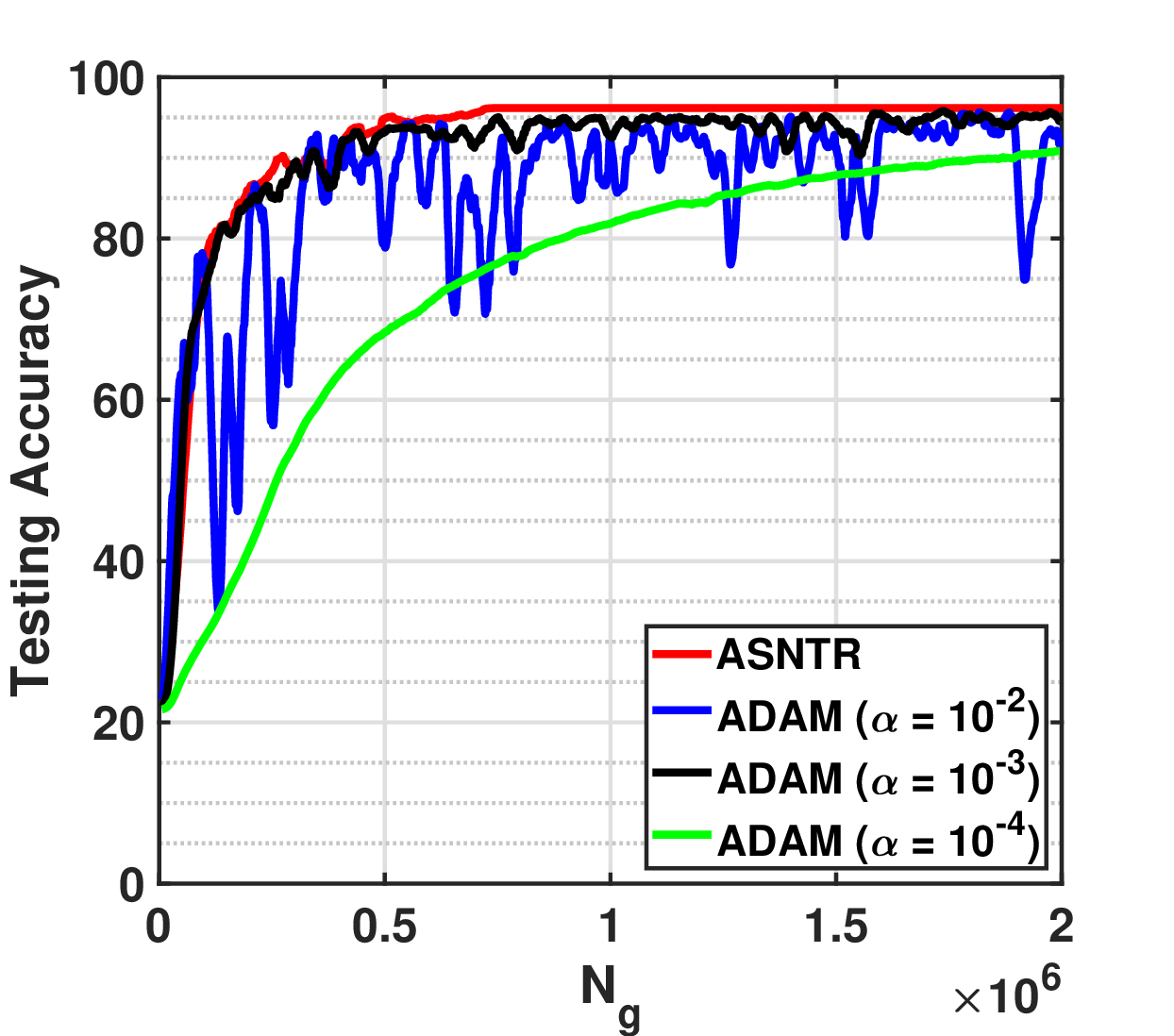}
    \includegraphics[width=5.8cm, height=3.9cm]{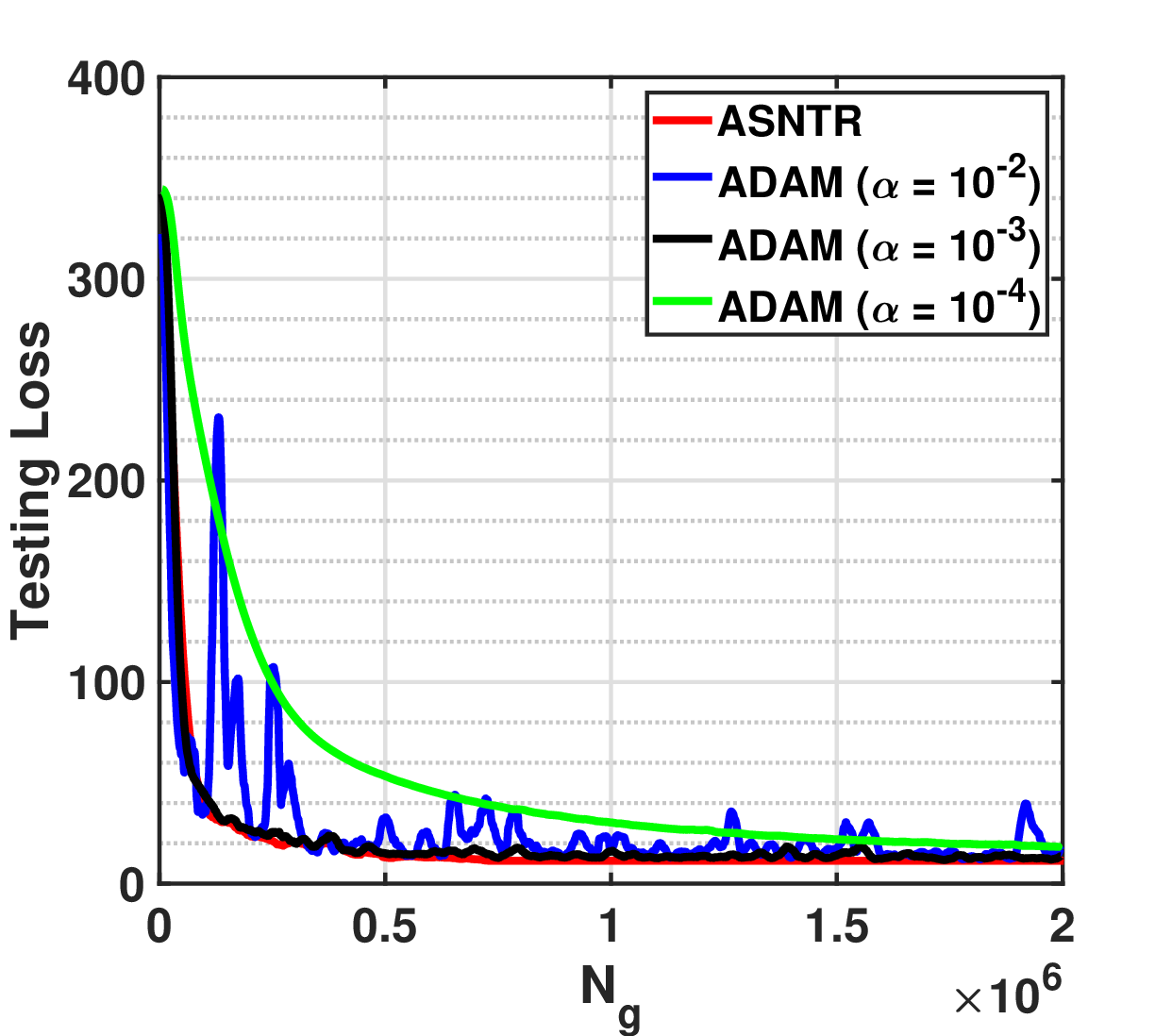}
    \caption{\small Comparison of ASNTR vs. tuned ADAM (first row), and vs. untuned ADAM (second row) on \texttt{DIGITS} for training \texttt{CNN-Rn} with \texttt{rng(42)}.}
    \label{Fig_Acc_Loss_DIGITS}
\end{figure}
\noindent fewer gradient computations if one does not count the effort needed for tuning the parameters. As expected, ASNTR is more robust and performs better than ADAM with suboptimal parameters. Future work on ASNTR could include more specified sample size updates and Hessian approximation strategies.
\bmhead{Acknowledgments}          
We are grateful to the two anonymous referees whose constructive comments helped us to improve the paper. 

The work of N. Kreji\'c and N. Krklec Jerinki\'c was supported by the Science Fund of the Republic of Serbia, Grant no. 7359, Project LASCADO.  A. Mart\'{\i}nez and M. Yousefi gratefully acknowledge the support of the INdAM-GNCS Project CUP$\_$E53C22001930001. The work of A. Mart\'{\i}nez was carried out within the PNRR research activities of the consortium iNEST (Interconnected North-Est Innovation Ecosystem) funded by the European Union Next-GenerationEU (Piano Nazionale di Ripresa e Resilienza (PNRR) – Missione 4 Componente 2, Investimento 1.5 – D.D. 1058 23/06/2022, ECS$\_$00000043). This manuscript reflects only the Authors’ views and opinions, neither the European Union nor the European Commission can be considered responsible for them.
\section*{Competing interests declarations}       
The authors have no relevant financial or non-financial interests to disclose.

\section*{Data availability statements}
The datasets utilized in this research, \texttt{DIGITS}, \texttt{MNIST}, and \texttt{CIFAR-10}, are publicly accessible and commonly employed benchmarks in the field of Machine Learning and Deep Learning, see \url{https://www.mathworks.com/help/deeplearning/ug/data-sets-for-deep-learning.html}, \url{ https://www.kaggle.com/datasets/hojjatk/mnist-dataset} and \cite{krizhevsky2009learning, lecun1998mnist}. 
\bibliography{References}

\end{document}